\begin{document}
\input xy
\xyoption{all}

\renewcommand{\mod}{\operatorname{mod}\nolimits}
\newcommand{\proj}{\operatorname{proj.}\nolimits}
\newcommand{\rad}{\operatorname{rad}\nolimits}
\newcommand{\soc}{\operatorname{soc}\nolimits}
\newcommand{\ind}{\operatorname{inj.dim}\nolimits}
\newcommand{\ch}{\operatorname{ch}\nolimits}
\renewcommand{\H}{\operatorname{H}\nolimits}
\newcommand{\id}{\operatorname{id}\nolimits}
\newcommand{\Mod}{\operatorname{Mod}\nolimits}
\newcommand{\End}{\operatorname{End}\nolimits}
\newcommand{\Ob}{\operatorname{Ob}\nolimits}
\newcommand{\Ht}{\operatorname{Ht}\nolimits}
\newcommand{\cone}{\operatorname{cone}\nolimits}
\newcommand{\rep}{\operatorname{rep}\nolimits}
\newcommand{\Ext}{\operatorname{Ext}\nolimits}
\newcommand{\Hom}{\operatorname{Hom}\nolimits}
\newcommand{\Div}{\operatorname{Div}\nolimits}
\newcommand{\RHom}{\operatorname{RHom}\nolimits}
\renewcommand{\deg}{\operatorname{deg}\nolimits}
\renewcommand{\Im}{\operatorname{Im}\nolimits}
\newcommand{\Ker}{\operatorname{Ker}\nolimits}
\newcommand{\Coh}{\operatorname{Coh}\nolimits}
\newcommand{\CH}{\operatorname{CH}\nolimits}
\renewcommand{\dim}{\operatorname{dim}\nolimits}
\renewcommand{\div}{\operatorname{div}\nolimits}
\newcommand{\Ab}{{\operatorname{Ab}\nolimits}}
\renewcommand{\Vec}{{\operatorname{Vec}\nolimits}}
\newcommand{\pd}{\operatorname{proj.dim}\nolimits}
\newcommand{\sdim}{\operatorname{sdim}\nolimits}
\newcommand{\add}{\operatorname{add}\nolimits}
\newcommand{\pr}{\operatorname{pr}\nolimits}
\newcommand{\cc}{{\mathcal C}}
\newcommand{\ce}{{\mathcal E}}
\newcommand{\cf}{{\mathcal F}}
\newcommand{\cx}{{\mathcal X}}
\newcommand{\ct}{{\mathcal T}}
\newcommand{\cu}{{\mathcal U}}
\newcommand{\cv}{{\mathcal V}}
\newcommand{\ca}{{\mathcal A}}
\newcommand{\cb}{{\mathcal B}}
\newcommand{\cg}{{\mathcal G}}
\newcommand{\cw}{{\mathcal W}}
\newcommand{\co}{{\mathcal O}}
\newcommand{\cd}{{\mathcal D}}
\newcommand{\calr}{{\mathcal R}}
\newcommand{\ol}{\overline}
\newcommand{\ul}{\underline}
\newcommand{\st}{[1]}
\newcommand{\ow}{\widetilde}
\renewcommand{\P}{\mathbf{P}}
\newcommand{\pic}{\operatorname{Pic}\nolimits}
\newcommand{\Spec}{\operatorname{Spec}\nolimits}
\newtheorem{theorem}{Theorem}[section]
\newtheorem{acknowledgement}[theorem]{Acknowledgement}
\newtheorem{algorithm}[theorem]{Algorithm}
\newtheorem{axiom}[theorem]{Axiom}
\newtheorem{case}[theorem]{Case}
\newtheorem{claim}[theorem]{Claim}
\newtheorem{conclusion}[theorem]{Conclusion}
\newtheorem{condition}[theorem]{Condition}
\newtheorem{conjecture}[theorem]{Conjecture}
\newtheorem{construction}[theorem]{Construction}
\newtheorem{corollary}[theorem]{Corollary}
\newtheorem{criterion}[theorem]{Criterion}
\newtheorem{definition}[theorem]{Definition}
\newtheorem{example}[theorem]{Example}
\newtheorem{exercise}[theorem]{Exercise}
\newtheorem{lemma}[theorem]{Lemma}
\newtheorem{notation}[theorem]{Notation}
\newtheorem{problem}[theorem]{Problem}
\newtheorem{proposition}[theorem]{Proposition}
\newtheorem{remark}[theorem]{Remark}
\newtheorem{solution}[theorem]{Solution}
\newtheorem{summary}[theorem]{Summary}
\newtheorem*{thm}{Theorem}

\def \Z{{\Bbb Z}}
\def \F{{\Bbb F}}
\def \C{{\Bbb C}}
\def \N{{\Bbb N}}
\def \R{{\Bbb R}}
\def \Q{{\Bbb Q}}
\def \P{{\Bbb P}}
\def \K{{\Bbb K}}
\def \E{{\Bbb E}}

\title{Coxeter transformations of the derived categories of coherent sheaves}
\author[Chen]{Xinhong Chen}
\address{Department of Mathematics, Southwest Jiaotong University, Chengdu 610031, P.R.China}
\email{chenxh2007@aliyun.cn}

\author[Lu]{Ming Lu$^\dag$}
\address{Department of Mathematics, Sichuan University, Chengdu 610064, P.R.China}
\email{luming2012@aliyun.cn}
\thanks{$^\dag$ Corresponding author}

\subjclass[2000]{13D09, 14F05, 15A18, 16E35}
\keywords{Coxeter transformation, Jordan canonical form, Lefschetz element, Rational surface, toric variety}

\begin{abstract}
In this paper, we study the Coxeter transformations of the derived categories of coherent sheaves on smooth complete varieties. We first obtain that if the rank  of the Grothendieck group is finite, say $m$, then its characteristic polynomials is $(\lambda+(-1)^{n})^m$, where $n$ is dimension of the variety. We then study the Jordan canonical forms of the Coxeter transformations for rational surfaces, smooth complete toric varieties with ample canonical or anticanonical bundles, and prove that their Jordan canonical forms can determine and be determined by the Betti numbers of these varieties.  As an application, we compute the Jordan canonical forms of tensor products of matrices.
\end{abstract}

\maketitle

\section{Introduction}
Coxeter transformations play an important role in the development of Lie group and Lie algebra theory. For a compact Lie group, a Coxeter transformation or a Coxeter element is defined to be the product of all the reflections
in the root system initially. H. S. Coxeter studies these elements and their eigenvalues in \cite{Coxeter51}. He also observes that the
number of roots in the corresponding root system is equal to the product of the order of the Coxeter transformation (called the Coxeter number) and the number of eigenvalues of the
Coxeter transformation, which is proved by A. J. Coleman in \cite{Col58}.  \\

In \cite{Ste85, SuSt78}, V. F. Subbotin and R. B. Stekolshchik consider the eigenvalues and the Jordan canonical forms of the Coxeter transformations for symmetrizable Cartan matrices. They prove that the Jordan canonical form of the Coxeter transformation is diagonal if and only if the Tits form is non-degenerate. For an extended Dynkin diagram, the Jordan canonical form of the Coxeter transformation contains only one $2\times2$ Jordan block and all eigenvalues have norm $1$. They also prove the following: if the Tits form is indefinite and degenerate, then the number of two-dimensional Jordan blocks
coincides with dimension of the kernel of the Tits form, and the remaining Jordan blocks are $1\times1$. C. M. Ringel generalizes their results to generalized Cartan matrices in \cite{Rin94}, he shows that the spectral radius $\rho$ of the Coxeter transformation has norm greater than $1$, and $\rho$ is also an eigenvalue with multiplicity one. A. J. Coleman computes characteristic polynomials for the Coxeter transformations of all extended Dynkin diagrams, including the case with cycles $\tilde{A}_n$ in \cite{Col89}. There is a nice survey \cite{Ste05} on this subject and recent development \\

The Coxeter transformation is also important in the study of representations of algebras,
quivers, partially ordered sets (posets) and lattices, see \cite{DR76, DR81, PT90, Pe94, Rin94}. Its distinguished role in these areas is
related to the construction of the Coxeter functors $DTr$ (see \cite{ARS95}) and given by I. N. Bernstein, I. M. Gelfand and
V. A. Ponomarev in \cite{BGP73}.
Given any path algebra $A$ with finite global dimension, let $C_A$ be the Cartan matrix. It is well-known that the Coxeter transformation coincides with $-C_A^tC_A^{-1}$. One interesting result is that the spectral radius $\rho$ of the Coxeter transformation is related to the representation type of the hereditary algebra. Concretely, let $A$ be a hereditary algebra.
Then

\noindent $\bullet$ $A$ is representation-finite if $\rho_A=1 $ and $1$ is not a root of the Coxeter polynomial $\chi_A$.

\noindent$\bullet$ $A$ is tame if $\rho_A= 1 \in \mathrm{Roots}(\chi_A)$.

\noindent$\bullet$ $A$ is wild if $ \rho_A>1$.
\\

When we consider the bounded derived category of a finite dimensional algebra $A$ and its Grothendieck group, the Coxeter transformation is just the linear transformation on the Grothendieck group induced by the functor $S_A[-1]$ (see, e.g., \cite{LP08}), where $S_A$ is the Serre functor (see \cite{BK90}) and $[-1]$ is the inverse of the shift functor which is denoted by $[1]$. So in general, we can define the Coxeter transformation of a triangulated category with Serre functor $S$ to be the linear transformation induced by the functor $S[-1]$ on Grothendieck group. It is an invariant under triangulated equivalences. About the derived equivalences of rings and algebras, J. Rickard get interesting results in \cite{Rick89, Rick91}.\\

Besides the bounded derived categories of finite dimensional algebras (see, e.g., \cite{Happel88}), another important class of triangulated categories is the bounded derived categories of coherent sheaves on (smooth projective) varieties, see, e.g., \cite{Bondal-Orlov97, Bri06}.
An interesting question is when the derived category of a variety is equivalent to the one of a finite dimensional algebra. In 1970's, A. A. Beilinson constructs an algebra derived equivalent to $\P^n$, in this case, tilting sheaves play an important role. In \cite{Bondal90, BK90}, A. I. Bondal and M. M. Kapranov give an useful method to find a tilting sheaf: (strongly) exceptional sequences of sheaves (in particular line bundles). After that, many smooth complete varieties are found to admit a strongly exceptional sequences of line bundles, and most of them are smooth complete toric varieties, \cite{HP08, Kin97, Per03, Per09}.
A. King conjectures that all smooth toric varieties admit a strongly exceptional sequence of line bundles in \cite{Kin97}, but this is proved false by L. Hille and M. Perling in \cite{HP06}.
\\

By Serre duality \cite{Hart77}, we know that the derived categories of smooth complete varieties admit Serre functors,
so we can define the Coxeter transformations for them.

The main goal of this paper is to investigate the Coxeter transformation of the derived category of coherent sheaves on smooth complete varieties. We compute their characteristic polynomials and Jordan canonical forms. We also find relations between these invariants and the geometric properties of the varieties. These invariants give necessary conditions for the bounded derived category of a smooth variety $X$ to be triangulated equivalent to the bounded derived category of a finite dimensional algebra $A$.\\

The paper is organized as follows. In Section 2, we recall some basic properties of smooth varieties and facts needed for our main results.
In Section 3, we prove that if the rank of the Grothendieck group of a smooth complete variety of dimension $n$ is finite, say $m$, then the characteristic polynomial of the Coxeter transformation is $(\lambda+(-1)^{n})^m$, see Theorem \ref{main theorem 1}. \\

In Section 4, we study the Jordan canonical forms of Coxeter transformations. We first prove that for a smooth complete toric variety with ample canonical or anticanonical
sheaf, its Betti numbers are uniquely determined by and also determine the number of the Jordan blocks in the Jordan canonical form of the Coxeter transformation and their dimensions, see Theorem \ref{main theorem 3}. We then prove that for a rational surface, the number of the Jordan blocks is equal to the rank of the Picard group; furthermore if the rank of the Picard group is not $10$, then the Jordan canonical form of the Coxeter transformation contains only one Jordan block of dimension 3, and the others are of dimension one; otherwise, it has two Jordan block of dimension $2$, and the others are of dimension $1$, see Theorem \ref{main theorem 2}. As a corollary, if a rational surface or a smooth complete toric variety $X$ with ample canonical or anticanonical divisor is derived equivalent to a finite dimensional algebra $A$, then the Betti numbers of $X$ are determined by the matrix $-C_A^tC_A^{-1}$, see Corollary \ref{corollary derived equivalent}.\\

In the final Section 5, we first prove that for two smooth toric varieties $X$ and $Y$, if the Jordan
canonical form of the Coxeter transformations of $X$ and $Y$ are
$\bigoplus_{i=1}^{t_1}J^X_i$ and $\bigoplus_{j=1}^{t_2}J^Y_j$ respectively, then the sizes and number of Jordan blocks in
the Jordan canonical form of the Coxeter transformation of $X\times Y$ are the same as that of
$\bigoplus_{i,j=1}^{t_1,t_2}J^X_i\boxtimes J^Y_j$, see Proposition \ref{theorem Jordan block}. We then give a formula to compute the Jordan canonical form for tensor product of invertible matrices, see Proposition \ref{proposition jordan block}. This formula generalizes a formula for the tensor product of two matrices in \cite{MV} and it is useful to construct graded local Frobenius algebras.

\begin{acknowledgement}
The authors deeply thank their supervisors Professor Bin Zhang and Professor Liangang Peng for helpful discussions.
\end{acknowledgement}

\section{Preliminaries}
Throughout this paper, we assume that the ground field $k$ is
an algebraically closed field of characteristic $0$.\\

Let $X$ be a variety over $k$. The category of coherent sheaves on
$X$ is denoted by $\Coh(X)$. The locally
free coherent sheaves on $X$ are called \emph{Vector bundles}, and they form a full subcategory of $\Coh(X)$ which is denoted by
$\Vec(X)$.
\begin{definition} 
The \emph{Grothendieck group of coherent sheaves} on a variety $X$,
denoted by $G_0(X)$, is defined to be generated by $[\cf]$, the
isomorphism classes of coherent sheaves on $X$, subject to the
relations $[\cf]=[\cf']+[\cf'']$ for every exact sequence
$$0\rightarrow\cf'\rightarrow\cf\rightarrow\cf''\rightarrow0.$$
\end{definition}

Similarly, we have the following definition.
\begin{definition} 
The \emph{Grothendieck group of vector bundles} on a variety $X$,
denoted by $K_0(X)$, is defined to be generated by $[\mathbb{E}]$,
the isomorphism classes of vector bundles on $X$, subject to the
relations $[\mathbb{E}]=[\mathbb{E}']+\mathbb{E}''$ for every exact
sequence
$$0\rightarrow\mathbb{E}'\rightarrow\mathbb{E}\rightarrow\mathbb{E}''\rightarrow0.$$
\end{definition}

In the affine case, every short exact sequence of vector bundles
splits. Therefore, $K_0(X)=K_0(A)$ for an affine variety $X=\Spec
A$, where $K_0(A)$ is the Grothendieck group of projective modules over $A$, see \cite{BG08}.

The embedding of the category of vector bundles on $X$ into the category of
coherent sheaves induces the \emph{Cartan homomorphism}
$$K_0(X)\rightarrow G_0(X).$$ This homomorphism is in general
neither injective nor surjective, but for a smooth variety $X$, it
is an isomorphism.
\begin{theorem}\label{theorem cartan hom} (see, e.g., \cite{BG08})
For a smooth variety, the Cartan homomorphism is an isomorphism.
\end{theorem}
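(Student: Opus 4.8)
The plan is to construct an explicit inverse $\phi\colon G_0(X)\to K_0(X)$ to the Cartan homomorphism $\psi\colon K_0(X)\to G_0(X)$, $[\E]\mapsto[\E]$. The essential input, which I would establish first, is that on a smooth variety every coherent sheaf $\cf$ admits a \emph{finite} locally free resolution
$$0\to\ce_d\to\cdots\to\ce_1\to\ce_0\to\cf\to0.$$
Existence of such resolutions rests on two facts. First, $X$ has enough locally free sheaves, i.e. every coherent sheaf is the quotient of a vector bundle (this holds, for instance, when $X$ is quasi-projective), so one can always begin a locally free resolution. Second, smoothness means every local ring $\co_{X,x}$ is regular, hence of finite global dimension equal to $\dim\co_{X,x}\le n=\dim X$ by the Auslander--Buchsbaum--Serre theorem; consequently the $n$-th syzygy sheaf appearing in any such resolution is locally free over each stalk and therefore a vector bundle. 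Combining the two facts yields a locally free resolution of length at most $n$.

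Granting finite resolutions, I would define $\phi([\cf])=\sum_{i\ge0}(-1)^i[\ce_i]\in K_0(X)$ and verify that $\phi$ is a well-defined homomorphism. Independence of the chosen resolution follows from the comparison theorem for projective resolutions together with a Schanuel-type cancellation argument: any two finite locally free resolutions of $\cf$ can be compared by a chain map, and in the alternating sum of classes in $K_0(X)$ the contribution of any split-exact complex of vector bundles vanishes. To see that $\phi$ respects the defining relations of $G_0(X)$, i.e. that $\phi([\cf])=\phi([\cf'])+\phi([\cf''])$ for a short exact sequence $0\to\cf'\to\cf\to\cf''\to0$, I would invoke the horseshoe lemma: it produces a locally free resolution of $\cf$ whose $i$-th term is the direct sum $\ce_i'\oplus\ce_i''$ of terms of chosen resolutions of $\cf'$ and $\cf''$, and the alternating sum then splits additively in $K_0(X)$.

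Finally I would check that $\phi$ and $\psi$ are mutually inverse. The composite $\psi\circ\phi$ is the identity on $G_0(X)$, because breaking the resolution into short exact sequences and applying the defining relations of $G_0(X)$ gives the telescoping identity $[\cf]=\sum_i(-1)^i[\ce_i]$ already in $G_0(X)$. The composite $\phi\circ\psi$ is the identity on $K_0(X)$, because a vector bundle is its own length-zero resolution, whence $\phi([\E])=[\E]$ on generators.

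I expect the main obstacle to be the well-definedness of $\phi$: proving that the alternating sum is independent of the resolution and additive along short exact sequences, which is where the homological bookkeeping (comparison of resolutions, the horseshoe lemma, and cancellation of split complexes in $K_0$) must be assembled with care. By contrast, the geometric heart of the argument is the single clean statement that smoothness forces finite locally free resolutions, which reduces entirely to the regularity of the local rings of $X$.
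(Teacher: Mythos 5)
The paper itself offers no proof of this statement; it is quoted from the literature with a citation to \cite{BG08}, and your argument is precisely the standard Borel--Serre-style proof given there and in SGA 6: finite locally free resolutions exist by regularity of the local rings, the inverse map is the alternating sum of the terms of a resolution, well-definedness is the homological bookkeeping you describe, and the two composites are checked exactly as you say. So in outline your proposal is correct and matches the cited source's approach.

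Two points deserve sharpening. First, the one genuinely geometric input you treat too lightly is the existence of \emph{global} surjections from vector bundles: you justify ``enough locally free sheaves'' only for quasi-projective $X$, but the theorem is asserted for all smooth varieties, and smoothness of the local rings alone gives finite \emph{local} freeness of syzygies, not a global resolution to start from. The missing fact is that a regular separated noetherian scheme admits an ample family of line bundles (Kleiman; SGA 6, II 2.2.7.1), hence has the resolution property; without this (or an explicit separatedness/finite-type hypothesis, implicit in ``variety''), your construction of $\phi$ never gets off the ground outside the quasi-projective case. Second, a small imprecision in the well-definedness step: a chain map between two resolutions does not by itself let you cancel, since its cone is exact but not split. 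The standard repair is either to dominate any two resolutions of $\cf$ by a common third one (a fiber-product resolution), or to observe that a \emph{bounded exact} complex of vector bundles has vanishing alternating sum in $K_0(X)$ because its syzygy sheaves are again locally free (stalkwise, bounded exact complexes of projectives split), so exactness, not splitness, is what you should invoke. With those two repairs your proof is complete and is the same argument the paper's reference records.
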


Therefore, for a smooth variety $X$, we do not distinguish between $G_0(X)$ and $K_0(X)$, and just use $K_0(X)$ to denote it.\\

We denote the bounded derived category of coherent sheaves on $X$ by
$\cd^b(\Coh(X))$ or just $\cd^b(X)$. It is well known that
$\cd^b(X)$ is a triangulated category with shift functor
$[1]$. The Grothendieck group of $\cd^b(X)$ is defined to be
generated by the isomorphism class of complexes of coherent sheaves,
subject to the distinguished triangles.

The following lemma is well-known.
\begin{lemma}
Let $X$ be a variety over $k$. Then the Grothendieck group of
$\cd^b(X)$ is isomorphic to $G_0(X)$. In particular, if $X$ is
smooth, then the Grothendieck group of $\cd^b(X)$ is isomorphic to
$K_0(X)$.
\end{lemma}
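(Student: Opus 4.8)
The plan is to construct mutually inverse homomorphisms between $G_0(X)$ and the Grothendieck group $K_0(\cd^b(X))$ of the triangulated category, following the classical comparison between the Grothendieck group of an abelian category and that of its bounded derived category; here the relevant abelian category is $\Coh(X)$. The second assertion then follows immediately from Theorem \ref{theorem cartan hom}, which identifies $G_0(X)$ with $K_0(X)$ when $X$ is smooth.

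First I would define a map $\phi\colon G_0(X)\to K_0(\cd^b(X))$ sending the class $[\cf]$ of a coherent sheaf to the class of $\cf$ regarded as a complex concentrated in degree $0$. This is well defined because any short exact sequence $0\to\cf'\to\cf\to\cf''\to0$ in $\Coh(X)$ gives rise to a distinguished triangle $\cf'\to\cf\to\cf''\to\cf'\st$ in $\cd^b(X)$, so the defining relation $[\cf]=[\cf']+[\cf'']$ of $G_0(X)$ is respected.

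In the other direction I would define $\psi\colon K_0(\cd^b(X))\to G_0(X)$ by the Euler characteristic of cohomology sheaves,
$$\psi([C^\bullet])=\sum_{i\in\Z}(-1)^i[\H^i(C^\bullet)],$$
a finite sum since $C^\bullet$ is bounded. To see that $\psi$ is well defined, I would take any distinguished triangle $A\to B\to C\to A\st$ and invoke the associated long exact sequence of cohomology sheaves; since the alternating sum of the terms of a bounded long exact sequence vanishes in $G_0(X)$, one obtains $\psi([B])=\psi([A])+\psi([C])$, so $\psi$ descends to the Grothendieck group.

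Finally I would check that $\phi$ and $\psi$ are mutually inverse. The composite $\psi\circ\phi$ is the identity because a sheaf $\cf$ placed in degree $0$ has $\H^0(\cf)=\cf$ and all other cohomology zero. The composite $\phi\circ\psi$ is the identity once one establishes that every object satisfies $[C^\bullet]=\sum_i(-1)^i[\H^i(C^\bullet)]$ in $K_0(\cd^b(X))$, where each cohomology sheaf sits in degree $0$. I expect this last identity to be the main point: it follows from the relation $[M\st]=-[M]$ in the Grothendieck group, obtained from the triangle $M\to0\to M\st\to M\st$, together with an induction on the cohomological amplitude of $C^\bullet$, using the canonical truncation triangles $\tau_{\le n}C^\bullet\to C^\bullet\to\tau_{\ge n+1}C^\bullet\to(\tau_{\le n}C^\bullet)\st$ to split off one cohomology sheaf at a time.
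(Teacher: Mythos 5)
Your proof is correct and complete: the degree-zero embedding $\phi$, the Euler-characteristic map $\psi$ (well defined via the long exact cohomology sequence of a triangle), and the identity $[C^\bullet]=\sum_i(-1)^i[\H^i(C^\bullet)]$ proved by $[M\st]=-[M]$ plus induction on cohomological amplitude via canonical truncations constitute the standard argument. The paper itself offers no proof --- it states the lemma as well-known (remarking that it holds for the bounded derived category of any abelian category) --- and your argument is precisely the classical one being implicitly invoked, so there is nothing to contrast.
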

In fact, for any abelian category $\ca$, we have that the Grothendieck group of $\cd^b(\ca)$ is isomorphic to $K_0(\ca)$. For this reason, we do not distinguish the Grothendieck group of $\cd^b(\ca)$ and $K_0(\ca)$ in the following. \\

The tensor product of vector bundles induces a commutative ring
structure on $K_0(X)$, we call it the \emph{Grothendieck ring} of $X$. In general, $G_0(X)$ is a $K_0(X)$-module.\\

\begin{remark} (\cite{Hart77})
Even though $X$ is a smooth complete variety of dimension $n$, we know that the Grothendieck group $K_0(X)$ is possibly huge, i.e., with infinite rank.
\end{remark}

In the following, we recall basic facts of \emph{Chow groups} of varieties, see \cite{Ful98}. Let
$Z_k(X)$ denote the group of $k$-dimensional algebraic cycles on a variety $X$.

The corresponding quotient group of $Z_k(X)$ modulo the rational equivalences is called the
\emph{Chow group} of $k$-cycles, denoted by $A_k(X)$. Let
$$A_\ast(X)=\bigoplus_{i=1}^nA_k(X),\quad n=\dim X.$$

For a general (not necessarily smooth) complete $n$-dimensional
variety $X$, the group of cycles of codimension $k$ modulo rational
equivalence is denoted by $\CH^k(X)=A_{n-k}(X)$. If in addition $X$ is smooth, then there is a
product
$$
\CH^k(X)\times \CH^l(X)\rightarrow \CH^{k+l}(X),\quad (V_1,V_2)\mapsto \delta^*(V_1\times V_2)
$$
where $\delta: X\rightarrow X\times X$ is the diagonal embedding. It coincides
with the geometric intersection of cycles in the case of transverse
intersections. We denote this product by $\alpha\cdot \beta$ for
$\alpha\in \CH^k(X)$ and $\beta\in \CH^l(X)$. This product
corresponds to cup product in cohomology, and there is a ring
homomorphism
$$
\CH^\ast(X)\rightarrow H^\ast(X,\Z).
$$
But this morphism is neither monomorphic nor epimorphic in general, \cite{BG08}.\\

Let $X$ be a variety of dimension $n$. The \emph{Chern character}
$\mathrm{ch}(\E)$ of a vector bundle $\E$ of rank $r$ is given by
$$
\mathrm{ch}(\E)=\sum_{k=0}^\infty
\frac{p_k(\E)}{k!}\in\CH^\ast(X)_\Q,
$$
where
$$
p_k(\E)=x_1(\E)^k+\cdots+ x_r(\E)^k, \quad x_i(\E)'s\mbox{ are the chern roots of }\E.
$$
In particular, for a line bundle $\mathbb{L}$ and its corresponding Cartier
divisor $D$ on $X$, we have
$$
\mathrm{ch}(\mathbb{L})=1+c_1(\mathbb{L})+\frac{c_1(\mathbb{L})^2}{2!}+\cdots+\frac{c_1(\mathbb{L})^n}{n!}
$$
$$
\quad\quad\quad\quad\,\,=1+D+\frac{D^2}{2!}+\cdots+\frac{D^n}{n!},\,n=\dim(X).
$$
\\

For an abelian group $B$, we use the notation $B_\Q:=B\otimes \Q$.
\begin{theorem} (\cite{BG08, Ful98})\label{lemma chern map}
Let $X$ be a smooth complete variety. There is a ring isomorphism
$$
(\mathrm{ch}_X)_\Q:K_0(X)_\Q\rightarrow \CH^\ast(X)_\Q, \quad [\E]\mapsto \mathrm{ch}_X(\E).
$$
\end{theorem}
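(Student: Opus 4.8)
The plan is to establish that $(\ch_X)_\Q$ is a ring homomorphism and then prove bijectivity by comparing natural filtrations on the source and target and passing to associated graded objects. First I would record that $\ch_X$ is a well-defined ring homomorphism $K_0(X)\to\CH^\ast(X)_\Q$. On a vector bundle it is defined through the Chern roots, and the Whitney sum formula $c_t(\E)=c_t(\E')c_t(\E'')$ attached to a short exact sequence $0\to\E'\to\E\to\E''\to 0$ yields $\ch_X(\E)=\ch_X(\E')+\ch_X(\E'')$; hence $\ch_X$ respects the defining relations of $K_0(X)$ and descends to the Grothendieck group. Multiplicativity $\ch_X(\E\otimes\E')=\ch_X(\E)\cdot\ch_X(\E')$, again via the splitting principle, shows it is a ring map, and tensoring with $\Q$ produces $(\ch_X)_\Q$. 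Note that $\Q$-coefficients are already unavoidable here, since $\ch_X$ involves the denominators $k!$ and does not in general land in $\CH^\ast(X)$ integrally.

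To prove bijectivity I would use Theorem \ref{theorem cartan hom} to identify $K_0(X)=G_0(X)$ and work with the codimension-of-support filtration $F^pK_0(X)$, the subgroup generated by the classes $[\cf]$ of coherent sheaves with $\operatorname{codim}(\operatorname{supp}\cf)\geq p$. This is a finite decreasing filtration (with nonzero graded pieces only in degrees $0,\dots,n$, where $n=\dim X$), and I would match it against the grading $F^p\CH^\ast(X)_\Q=\bigoplus_{q\geq p}\CH^q(X)_\Q$ of the target. The map $(\ch_X)_\Q$ is then \emph{filtered}: a sheaf supported in codimension $\geq p$ has Chern character starting in degree $\geq p$. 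The crux is the induced map on associated graded pieces. Here the key computation is that for an integral subvariety $V\subseteq X$ of codimension $p$, the lowest-degree component of $\ch_X(\co_V)$ equals the cycle class $[V]\in\CH^p(X)$, as one checks from a locally free (Koszul-type) resolution of $\co_V$, the higher Todd-class corrections all lying in codimension $>p$. Combined with the fact that the map $\CH^p(X)_\Q\to\operatorname{gr}^p_FK_0(X)_\Q$, $[V]\mapsto[\co_V]$, is an isomorphism of $\Q$-vector spaces (a d\'evissage by support together with the Riemann--Roch theorem for singular varieties), this shows that each graded map $\operatorname{gr}^p(\ch_X)_\Q$ is an isomorphism.

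Finally I would bootstrap from the graded statement to the ungraded one: since both filtrations are finite and $(\ch_X)_\Q$ is a filtered homomorphism inducing isomorphisms on all graded quotients, a descending induction on $p$ (equivalently, the five lemma applied to the short exact sequences $0\to F^{p+1}\to F^p\to\operatorname{gr}^p\to 0$) forces $(\ch_X)_\Q$ itself to be an isomorphism. The main obstacle is the graded comparison of the previous step: identifying $\operatorname{gr}^p_FK_0(X)_\Q$ with $\CH^p(X)_\Q$ and pinning down the leading term of $\ch_X(\co_V)$ is precisely the content of Grothendieck--Riemann--Roch (Riemann--Roch without denominators), and this is where smoothness and completeness of $X$, as well as the passage to $\Q$-coefficients, are genuinely used. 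The remaining steps are formal.
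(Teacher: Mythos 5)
The paper gives no proof of this theorem, quoting it directly from \cite{BG08, Ful98}, and your outline---Whitney-formula additivity to get a well-defined ring map, the codimension-of-support filtration on $K_0(X)$ matched against the grading $\bigoplus_{q\geq p}\CH^q(X)_\Q$, the identification $\operatorname{gr}^p K_0(X)_\Q\cong\CH^p(X)_\Q$ via $[V]\mapsto[\co_V]$ with the leading term of $\mathrm{ch}_X(\co_V)$ pinned down by Riemann--Roch without denominators, followed by descending induction on the finite filtration---is precisely the standard Grothendieck/Borel--Serre argument given in those references, so your proposal is correct and matches the intended proof. The only quibble is your closing claim that completeness is ``genuinely used'': smoothness alone suffices both for Theorem \ref{theorem cartan hom} and for the Riemann--Roch input, so completeness plays no essential role in this particular theorem.
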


For a normal variety $X$, the sheaf of Zariski
$p$-forms is defined to be
$$\hat{\Omega}^p_X=(\Omega^p_X)^{\vee\vee}=j_*\Omega^p_{U_0},$$
where $j:U_0\hookrightarrow X$ is the
inclusion of the smooth locus of $X$.
\begin{definition} (see, e.g., \cite{Hart77}) The \emph{canonical
sheaf} of a normal variety $X$ is $\omega_X=\hat{\Omega}^n_X$, where
$n$ is the dimension of $X$. This is a reflexive sheaf of rank 1, so
that $\omega_X\cong \co_X(D)$ for some Weil divisor $D$ on $X$.
This divisor is called the \emph{canonical divisor} of $X$, denoted by $K_X$.
\end{definition}
Notice that for a smooth variety $X$, the
canonical divisor is a Cartier divisor.

\begin{theorem}[Serre duality](see, e.g., \cite{Hart77})
\label{theorem serre duality} Let $\omega_X$ be the canonical sheaf
of a complete normal Cohen-Macaulay variety $X$ of dimension $n$.
Then for every coherent sheaf $\cf$ on $X$, there is a natural
isomorphism
$$D H^p(X,\cf)\simeq \Ext^{n-p}_{\co_X}(\cf,\omega_X),$$
$\mbox{ where }D=\Hom(-,k) \mbox{ is the duality functor.}$
\end{theorem}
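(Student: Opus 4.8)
The plan is to follow the dualizing-sheaf approach, reducing the assertion to an explicit computation on projective space. Since the cases of interest here are projective, I would assume $X$ projective (the general complete case being subsumed by Grothendieck's duality formalism), fix a closed embedding $i\colon X\hookrightarrow\P^N$ of codimension $r=N-n$, and write $\P=\P^N$ for the ambient space. On $\P$ one has $\omega_\P\cong\co_\P(-N-1)$, and an explicit computation of the cohomology of the line bundles $\co_\P(m)$ shows that $H^N(\P,\omega_\P)\cong k$ and that the natural composition pairing
$$
\Hom_{\co_\P}(\cf,\omega_\P)\times H^N(\P,\cf)\longrightarrow H^N(\P,\omega_\P)\xrightarrow{\ \sim\ }k
$$
is perfect for every coherent sheaf $\cf$ on $\P$. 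This exhibits $\omega_\P$ as a dualizing sheaf and fixes the trace map $t\colon H^N(\P,\omega_\P)\to k$ that anchors the whole argument.

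Next I would promote this top-degree statement to all degrees on $\P$. Regarded as contravariant functors of $\cf$, both $\cf\mapsto H^{N-i}(\P,\cf)^{\ast}$ and $\cf\mapsto\Ext^{\,i}_{\co_\P}(\cf,\omega_\P)$ carry natural structures of $\delta$-functors (the former by dualizing the cohomology long exact sequence, which stays exact since $k$ is a field), they agree for $i=0$ by the perfect pairing above, and each is coeffaceable: every coherent sheaf is a quotient of some $\cg=\bigoplus\co_\P(-m)$ with $m\gg0$, and for such $\cg$ one has $H^{j}(\P,\cg)=0$ for $j<N$ and $\Ext^{\,i}_{\co_\P}(\cg,\omega_\P)=0$ for $i>0$. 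A comparison of universal $\delta$-functors then yields $H^{N-i}(\P,\cf)^{\ast}\cong\Ext^{\,i}_{\co_\P}(\cf,\omega_\P)$ for all $i$ and all coherent $\cf$.

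To transport this to $X$, set $\omega_X^{\circ}:=\ul{\Ext}^{\,r}_{\co_\P}(\co_X,\omega_\P)$. The Cohen--Macaulay hypothesis is decisive here: as $X$ is Cohen--Macaulay of codimension $r$ inside the regular scheme $\P$, the sheaves $\ul{\Ext}^{\,q}_{\co_\P}(\co_X,\omega_\P)$ vanish for $q\neq r$, and the fundamental local isomorphism together with this vanishing gives natural isomorphisms $\Ext^{\,j}_{\co_X}(\cf,\omega_X^{\circ})\cong\Ext^{\,j+r}_{\co_\P}(i_{\ast}\cf,\omega_\P)$ for coherent $\cf$ on $X$. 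Combining this with the duality on $\P$ and the identity $H^{p}(X,\cf)=H^{p}(\P,i_{\ast}\cf)$ for the closed immersion $i$, I obtain
$$
H^{p}(X,\cf)^{\ast}\cong\Ext^{\,N-p}_{\co_\P}(i_{\ast}\cf,\omega_\P)\cong\Ext^{\,N-p-r}_{\co_X}(\cf,\omega_X^{\circ})=\Ext^{\,n-p}_{\co_X}(\cf,\omega_X^{\circ}),
$$
since $n=N-r$; as $DH^{p}(X,\cf)=H^{p}(X,\cf)^{\ast}$, this is the asserted isomorphism once $\omega_X^{\circ}$ is identified with $\omega_X$.

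That identification $\omega_X^{\circ}\cong\omega_X=\hat\Omega^n_X$ is the step I expect to be the main obstacle. Over the smooth locus $U_0$ the adjunction formula identifies $\omega_X^{\circ}|_{U_0}$ with $\wedge^{n}\Omega_{U_0}=\omega_X|_{U_0}$, so the two sheaves agree canonically there. Both are reflexive of rank one --- $\omega_X=j_{\ast}\Omega^n_{U_0}$ by definition, and $\omega_X^{\circ}$ by the depth properties of $\ul{\Ext}$ over the regular scheme $\P$ under the Cohen--Macaulay hypothesis --- and since $X$ is normal, its singular locus has codimension at least $2$. A reflexive sheaf is determined by its restriction to any open subset whose complement has codimension $\geq2$, so the canonical isomorphism over $U_0$ extends uniquely to all of $X$. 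Verifying that this extension is compatible with the trace maps, so that the abstract duality pairing coincides with the geometric Serre pairing, is the delicate point where normality and the Cohen--Macaulay condition are used in tandem.
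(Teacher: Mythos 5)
The paper states this theorem without proof, citing Hartshorne, and your strategy is precisely the standard argument of that source (III.7): explicit duality on $\P^N$ via the perfect pairing into $H^N(\P,\omega_\P)\cong k$, promotion to all degrees by universal coeffaceable $\delta$-functors, transport to $X$ through $\omega_X^\circ=\ul{\Ext}^{\,r}_{\co_\P}(\co_X,\omega_\P)$ using the Cohen--Macaulay vanishing of the other $\ul{\Ext}$ sheaves, and finally the identification $\omega_X^\circ\cong j_*\Omega^n_{U_0}$ by agreement on the smooth locus plus reflexivity and normality --- so the proposal is correct and takes essentially the same route as the paper's reference. The only points to keep honest, which you rightly flag, are that passing from complete to projective is an appeal to Grothendieck's duality formalism rather than a reduction, and that the final extension across the singular locus must be checked compatible with the trace map.
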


So for any $\cf,\cg$ on $X$, there is a natural isomorphism
$D\Hom_{\co_X}(\cg,\cf)\simeq \Ext^n_{\co_X}(\cf,\cg\otimes
\omega_X)$. This motivates the following definition.

\begin{definition}[Serre functor] (\cite{BK90}) Let $\ca$ be a $k$-linear
Hom-finite category. An autoequivalent functor $S_A$ is called a
\emph{Serre functor} of $\ca$, if there exists a natural
isomorphism $$D\Hom_\ca(X,Y)\cong \Hom_\ca(Y,S_\ca X)$$ for any
object $X,Y\in\ca$, where $D=\Hom(-,k)$.
\end{definition}
In particular, a Serre functor is unique up to a natural isomorphism.\\

When we consider the bounded derived category $\cd^b(X)$ for a smooth complete variety $X$ of dimension $n$, by
Theorem \ref{theorem serre duality}, we know that $\cd^b(X)$ has a
Serre functor $-\otimes_{\co_X}\omega_X[n]$. The importance of the
existence of Serre functor in a Krull-Schmidt triangulated category is that
it is equivalent to the existence of Auslander-Reiten triangles in it. The
Auslander-Reiten functor is equivalent to $S_\ca[-1]$, so the
functor $-\otimes\omega_X[n-1]$ is the Auslander-Reiten functor of
$\cd^b(X)$.
In the case of finite dimensional algebras, the Coxeter transformations are defined to be induced by the Auslander-Reiten functors on Grothendieck groups.
So we have the following definition similar to that of finite dimensional algebras.

\begin{definition}
Let $X$ be a smooth complete variety of dimension $n$ and $\omega_X$ be its canonical sheaf. Then the 
transformation induced by $-\otimes\omega_X[n-1]$ on $K_0(X)_\Q$ is called
the \emph{Coxeter transformation} of $X$. Furthermore, if the rational Grothendieck group $K_0(X)_\Q$ is finite dimensional, then the characteristic polynomial of the Coxeter transformation is called the \emph{Coxeter polynomial} of $X$.
\end{definition}

If two triangulated categories with Serre functors are triangulated equivalent, then their
Grothendieck groups are isomorphic and equivalent functors commute
with Serre functors. It is easy to see that the Coxeter
polynomial is an invariant of triangulated equivalences.

\section{Coxeter polynomials}

In Section 2, we define the Coxeter polynomials for a smooth complete variety. In this Section, we compute it explicitly.

\begin{theorem}\label{main theorem 1}
Let $X$ be a smooth complete variety of dimension $n$ and the rank $m$
of the Grothendieck group $K_0(X)$ be finite. Then the Coxeter polynomial of $X$ is $(\lambda+(-1)^{n})^m$. In particular, all eigenvalues of the Coxeter
transformation are $(-1)^{n-1}$.
\end{theorem}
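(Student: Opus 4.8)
The plan is to compute the Coxeter transformation explicitly on the Grothendieck group and then transport the computation to the Chow ring, where the relevant operator becomes visibly unipotent up to a sign. First I would unravel the definition. The Coxeter transformation is induced by $-\otimes\omega_X[n-1]$. On $K_0(X)_\Q$ the shift functor $[1]$ acts as multiplication by $-1$, since in any distinguished triangle the shift contributes with a sign to the defining relation in the Grothendieck group; hence $[n-1]$ acts as $(-1)^{n-1}$. On the other hand $-\otimes\omega_X$ is multiplication by the (invertible) class $[\omega_X]$ in the Grothendieck ring $K_0(X)$. Therefore the Coxeter transformation is the operator
$$
[\cf]\longmapsto (-1)^{n-1}\,[\omega_X]\cdot[\cf]
$$
on $K_0(X)_\Q$, that is, $(-1)^{n-1}$ times multiplication by $[\omega_X]$.

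Next I would pass to the Chow ring. By Theorem \ref{lemma chern map}, $(\ch_X)_\Q\colon K_0(X)_\Q\to\CH^\ast(X)_\Q$ is a \emph{ring} isomorphism, so it conjugates multiplication by $[\omega_X]$ into multiplication by $\ch(\omega_X)$. Since $\omega_X\cong\co_X(K_X)$ is a line bundle, the Chern character formula recorded in the Preliminaries gives
$$
\ch(\omega_X)=1+K_X+\frac{K_X^2}{2!}+\cdots+\frac{K_X^n}{n!}.
$$
Thus, after this conjugation, the Coxeter transformation becomes $(-1)^{n-1}$ times multiplication by $\ch(\omega_X)$ on the finite-dimensional $\Q$-vector space $\CH^\ast(X)_\Q$.

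The key observation is the nilpotency forced by the grading. Since $X$ has dimension $n$, the ring $\CH^\ast(X)_\Q=\bigoplus_{k=0}^{n}\CH^k(X)_\Q$ vanishes in codimension $>n$, so every element of strictly positive degree is nilpotent. Writing $\ch(\omega_X)=1+N$ with $N=K_X+\cdots+\frac{K_X^n}{n!}$ of strictly positive degree, multiplication by $N$ strictly raises the codimension grading and is therefore a nilpotent operator. Hence multiplication by $\ch(\omega_X)=1+N$ is unipotent, with $1$ as its only eigenvalue. Consequently the Coxeter transformation, being $(-1)^{n-1}$ times this unipotent operator, has $(-1)^{n-1}$ as its only eigenvalue, so its characteristic polynomial is $(\lambda-(-1)^{n-1})^m=(\lambda+(-1)^{n})^m$, which is exactly the claim.

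The computation is essentially forced once the framework is in place, so I do not expect a serious obstacle beyond bookkeeping. The two points requiring care are the sign contributed by the shift functor on the Grothendieck group, and the use of Theorem \ref{lemma chern map} to guarantee that $\ch$ is multiplicative, so that tensoring by $\omega_X$ genuinely corresponds to multiplication by $\ch(\omega_X)$ in $\CH^\ast(X)_\Q$. The one substantive input is the grading-induced nilpotency of $K_X$, which is what makes $\ch(\omega_X)$ unipotent and hence pins down the spectrum.
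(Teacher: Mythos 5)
Your proposal is correct and takes essentially the same approach as the paper: you transport the action of $-\otimes\omega_X$ to $\CH^\ast(X)_\Q$ via the Chern character ring isomorphism, handle the shift by the sign $(-1)^{n-1}$, and conclude from the fact that multiplication by $\ch(\omega_X)=1+N$, with $N$ of strictly positive degree, is unipotent. The paper expresses this last step in matrix form---choosing a graded basis of $\CH^\ast(X)_\Q$ so that the operator becomes upper triangular with $1$'s on the diagonal---which is precisely your grading-induced nilpotency argument in different clothing.
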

\begin{proof}
Because $X$ is a smooth complete variety, we have the following ring isomorphism
$$(\ch_X)_\Q:K_0(X)_\Q\rightarrow \CH^\ast (X)_\Q, \quad [\mathbb{E}]\mapsto \ch_X(\mathbb{E}).$$

The Coxeter polynomial is the characteristic
polynomial of the induced linear transformation $\Phi$ on
Grothendieck group by the Auslander-Reiten functor
$-\otimes\omega_X[n-1]$. For any object $W^{*}\in \cd^b(X)$, we have
$[W^{*}[n-1]]=(-1)^{n-1}[W^{*}]$, so we only need compute the induced linear
transformation $\Phi'$ on Grothendieck group by the functor
$-\otimes\omega_X$. The eigenvalues of $\Phi$ are the ones of
$\Phi'$ multiplied by $(-1)^{n-1}$.\\

Let $\Psi$ be the induced transformation on $\CH^\ast(X)_\Q$ by the intersection product
with $\ch_X(\omega_X)$. For any vector bundle $W$ on $X$, we have
$$\begin{array}{ccc}(\ch_X)_\Q\Phi'([W])&=(\ch_X)_\Q( [W\otimes\omega_X])\quad\quad\quad\quad\\
&=(\ch_X)_\Q([W])\cdot(\ch_X)_\Q([\omega_X])\\
&=\Psi(\ch_X)_\Q([W]).\quad\quad\quad\quad\quad\,\,\,\end{array}$$
Because $K_0(X)$ is generated by vector bundles,
we have the following commutative diagram. 
\[\xymatrix{ K_0(X)_\Q \ar[rr]^{(\ch_X)_\Q} \ar[dd]^{\Phi'}&& \CH^\ast(X)_\Q \ar[dd]^{\Psi}\\
\\
K_0(X)_\Q\ar[rr]^{(\ch_X)_\Q } && \CH^\ast(X)_\Q}\]
Since $(\ch_X)_\Q$ is an isomorphism,
it is easy to see that the matrix representing the linear
transformation $\Phi'$ is similar to the matrix representing $\Psi$, so we
only need compute the eigenvalues of $\Psi$.\\

For any element $[E_i]\in \CH^i(X)_\Q$, $[F_j]\in\CH^j(X)_\Q$, we have
$$[E_i]\cdot [F_j]\in \CH^{i+j}(X)_\Q \mbox{ and } \Psi([E_i])=[E_i\cdot
(\ch_X(\omega_X))].$$
Since $\omega_X=\co(K_X)$,
$$\ch_X(\co_X(K_X))=1+(K_X)+\frac{(K_X)^2}{2!}+\cdots+ \frac{(K_X)^n}{n!}\in
\CH^*(X,\Q).$$

Let $b_i$ be the rank of $\CH^i(X)$. We choose classes
$[E_{ij}]\in \CH^i(X)$ for $0\leq i\leq n$, $1\leq j\leq b_i$, such that $\{[E_{ij}]|j=1,\dots, b_i\}$ form a basis of
$\CH^i(X)_\Q$.

For any element $[E_{ij}]$,
$$\Psi([E_{ij}])=[E_{ij}\cdot
(\ch_X(\omega_X))]=[E_{ij}]+[E_{ij}\cdot K_X]+\cdots+
\frac{[E_{ij}\cdot K_X^{n-i}]}{(n-i)!}.$$ Using this basis, we can
write $\Psi([E_{ij}])$ as a linear combination of $$\{E_{kl}|0\leq k\leq n, 1\leq l\leq b_k\}.$$ For
$[E_{ij}\cdot (K_X)^k]\in \CH^{i+k}(X)$, it is a linear
combination of $[E_{(i+k)l}]$ since $$\{[E_{(i+k)l}]| 1\leq l\leq b_{i+k} \}$$ is a basis of
$\CH^{i+k}(X)$. So the matrix representing $\Psi$ under the basis
$[E_{ij}]$ is an $m\times m$ upper triangular matrix with $1$ in the diagonal. So all the eigenvalues of
$\Psi$ are $1$. Furthermore, we know that the Coxeter polynomial is $(\lambda+(-1)^{n})^m$. The proof is complete.
\end{proof}

The following definition is practically important to find an algebra derived equivalent to a smooth projective variety.
\begin{definition} (\cite{Beilinson79, BK90})
A coherent sheaf $T$ on a smooth projective variety $X$ is called a
\emph{tilting sheaf} if

(1) It has no higher self-extensions, i.e. $\Ext^i(T,T)=0$ for all
$i>0$,

(2) The endomorphism algebra of $T$, $A=\Hom_X(T,T)$, has finite
global homological dimension,

(3) the direct summands of $T$ generate the bounded derived category
$\cd^b(X)$.
\end{definition}

The importance of tilting sheaf is that it induces an equivalence
between $\cd^b(X)$ and $\cd^b(A)$ (the bounded derived category of
finite generated right $A$-modules). If $T$ is a vector bundle, we call it
a \emph{tilting bundle}. If $\cd^b(X)$ admits a tilting sheaf, then its
Grothendieck group is free.

For a finite dimensional algebra with finite global dimension, it is well-known that
$D^b(A)$ admits Auslander-Reiten triangles such that the Auslander-Reiten translation $\tau$ is an equivalence, see \cite{Happel88, Happel91}.
On the level of the Grothendieck group, $\tau$ induces the Coxeter transformation
$\Phi_A$.

Let $S_1,\dots, S_n$ be a complete system of pairwise non-isomorphic simple $A$-modules,
$P_1,\dots , P_n$ be the corresponding projective covers and $I_1, \dots , I_n$ be the injective
envelopes. The integers $c_{ij} = \dim_k \Hom(P_i, P_j)$ yield an integral $n\times n$ matrix
$C_A = (c_{ij})$ with the property that $\Phi = -C_A^tC_A^{-1}$ represents the Coxeter
transformation $\Phi_A$ under the basis of $K_0(A)$ formed by the classes $[P_1],\dots , [P_n]$.\\

From Theorem \ref{main theorem 1}, we have the following corollary directly.
\begin{corollary}
Let $A$ be a finite dimensional algebra over $k$ with finite global
dimension. If $\cd^b(A)$ is triangulated equivalent to a bounded derived category
of a smooth complete variety, then all the eigenvalues of the
Coxeter transformation of $A$ are equal, either $1$ or
$-1$.
\end{corollary}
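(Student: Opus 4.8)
The plan is to deduce everything from Theorem \ref{main theorem 1} together with the fact, already noted in the excerpt, that the Coxeter transformation is an invariant of triangulated equivalences. First I would fix a triangulated equivalence $F\colon \cd^b(A)\to \cd^b(X)$, where $X$ is a smooth complete variety of dimension $n$. Since $A$ has finite global dimension, $\cd^b(A)$ admits a Serre functor (by Happel's theorem), and by Serre duality so does $\cd^b(X)$; in particular both Coxeter transformations are defined, $\Phi_A$ being represented by $-C_A^tC_A^{-1}$.

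Next I would record that $F$ induces an isomorphism $K_0(A)\cong K_0(X)$ of Grothendieck groups. Because $A$ has finite global dimension, $K_0(A)$ is free of finite rank equal to the number $m$ of pairwise non-isomorphic simple $A$-modules; hence $K_0(X)$ also has finite rank $m$. This places us exactly in the hypothesis of Theorem \ref{main theorem 1}. The key step is then to check that $F$ intertwines the two Coxeter transformations. Since a Serre functor is unique up to natural isomorphism, the equivalence $F$ commutes with the Serre functors, $F\circ S_A\cong S_X\circ F$, and $F$ obviously commutes with the shift $[-1]$; so on Grothendieck groups the matrix of $\Phi_A$ is conjugate to the matrix of the Coxeter transformation of $X$, and the two share the same characteristic polynomial and hence the same eigenvalues.

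Finally, I would invoke Theorem \ref{main theorem 1}: the Coxeter polynomial of $X$ is $(\lambda+(-1)^{n})^m$, so all eigenvalues of the Coxeter transformation of $X$, and therefore of $A$, equal $(-1)^{n-1}$, which is $1$ when $n$ is odd and $-1$ when $n$ is even. I do not expect any serious obstacle here, as the argument is a formal consequence of invariance under triangulated equivalence; the only point requiring a moment's care is the compatibility of $F$ with the Serre functors, and this follows immediately from the uniqueness of the Serre functor recalled earlier in the excerpt.
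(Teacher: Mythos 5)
Your proposal is correct and follows the same route as the paper: the paper's own (one-line) proof likewise combines Theorem \ref{main theorem 1} with the freeness and finite rank of $K_0(A)$ and the invariance of the Coxeter polynomial under triangulated equivalence already recorded at the end of Section~2. You merely spell out the intertwining $F\circ S_A\cong S_X\circ F$ via uniqueness of the Serre functor, which the paper leaves implicit.
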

\begin{proof} We only need the fact that the Grothendieck group of a finite dimensional algebra is free and its rank is finite.
\end{proof}
\begin{example}\label{example projective space}
Let $\vec{Q}$ be the following quiver:
\[\xymatrix{\circ^1\ar@<4ex>[rr]^{r_{11}} \ar@<1ex>[rr]^{r_{12}}\ar@<-3.5ex>[rr]^{\vdots}_{r_{1n}}&& \circ^2\ar@<4ex>[rr]^{r_{21}} \ar@<1ex>[rr]^{r_{22}}\ar@<-3.5ex>[rr]^{\vdots}_{r_{2n}}&&\circ^3 &\cdots&\circ^{n-1} \ar@<4ex>[rr]^{r_{(n-1)1}} \ar@<1ex>[rr]^{r_{(n-1)2}}\ar@<-3.5ex>[rr]^{\vdots}_{r_{(n-1)n}}&& \circ^n}.\]

For any path $\alpha=r_{si_s}\circ r_{(s+1)i_{s+1}}\circ \cdots \circ r_{ti_{t}}$, where $1\leq s<t\leq n-1$ and $1\leq i_j\leq n, s\leq j\leq t$, we associate a monomial $X^\alpha=x_{i_s}x_{i_{s+1}}\cdots x_{i_{t}}$ to $\alpha$ in $\C[x_1,\dots,x_n]$. Let $A=k\vec{Q}/I$, where $I=\langle\alpha-\beta|X^\alpha=X^\beta \mbox{ in } \C[x_1,\dots,x_n]\rangle$.
A. A. Beilinson proves that $\cd^b(A)$ is equivalent to $\cd^b(\P^{n-1})$ in \cite{Beilinson79}. By Theorem \ref{main theorem 1}, we know that the Coxeter polynomial of $A$
is $(\lambda+(-1)^{n-1})^n$.
\end{example}

For a fixed variety, it is very difficult to find a finite dimensional algebra which is derived equivalent to it. L. Costa and R. M. Mir\'{o}-Roig proved that the following toric varieties have tilting bundles whose summands are line bundles, see \cite{CM04}.

(i) Any $d$-dimensional smooth complete toric variety with splitting fan.

(ii) Any $d$-dimensional smooth complete toric variety with Picard number $\leq2$.

(iii) del Pezzo toric surfaces.

L. Hille and M. Perling prove that any smooth complete rational surface which can be obtained by blowing up a Hirzebruch surface $\F_a$ two-times in several points, also has a tilting bundle whose summands are line bundles. See \cite{ HP08, Per03, Per09}.

\begin{example}\label{example rational surface}
Let $X$ be a variety which can be obtained from $\P^2$ by blowing up one-time in $t$ points.
Let $\vec{Q}$ be the following quiver: 
\[\xymatrix{&&\circ^{2^1} \ar@<0.25ex>[ddrr]\ar@<-0.25ex>[ddrr]&& &\\
&&\circ^{2^2} \ar@<0.25ex>[drr]\ar@<-0.25ex>[drr]&& &\\
\circ^1 \ar[uurr]\ar[urr]\ar[drr]\ar[ddrr]&&\vdots &&\circ^{3}\ar@<-1ex>[r]\ar@<1ex>[r]\ar[r] &\circ^4. \\
&&\circ^{2^{t-1}} \ar@<0.25ex>[urr]\ar@<-0.25ex>[urr]&& & \\
&&\circ^{2^t}  \ar@<0.25ex>[uurr]\ar@<-0.25ex>[uurr]&& &\\  }\]
When $t=0$, there are just three arrows from vertex $1$ to vertex $3$.
When $t=1$, there is additionally one arrow $b$ from vertex $1$ to vertex $3$, at this time, the quiver is as the following diagram shows.
\[\xymatrix{&&\circ^{2^1}\ar@<0.25ex>[drr]\ar@<-0.25ex>[drr]&& &\\
\circ^1\ar[urr]\ar[rrrr]^b &&&& \circ^{3}\ar@<-1ex>[r]\ar@<1ex>[r]\ar[r] &\circ^4}\]

From \cite{Per09}, we know that $\cd^b(X)$ is equivalent to $\cd^b(A)$, where $A=k\vec{Q}/I$, $I$ is described in Section $7$ in \cite{Per09}.
By Theorem \ref{main theorem 1}, we know that the Coxeter polynomial of $A$ is $(\lambda+1)^{t+3}$ since the rank of the Grothendieck group of $A$ is $t+3$.
\end{example}

\section{The Jordan canonical form of the Coxeter transformation}
We prove in the previous section that for
a smooth complete variety $X$, the Coxeter transformation of
$X$ is a linear transformation with eigenvalues $1$ or $-1$. In this section, we compute its
Jordan canonical form. It is also an invariant under derived
equivalences. \\

For an integer $s\geq1$ and an element $\alpha\in \C$ or $k$, let
\[J(\alpha,s)=\left(\begin{array}{ccccccc}\alpha &1&&\\
&\ddots&\ddots&\\
&&\alpha&1\\
&&&\alpha\end{array}\right).\] which is the
\emph{Jordan block} of dimension $s$ with eigenvalues $\alpha$.

\subsection{Further Preliminaries}
First, we recall some definitions and facts about Fano varieties and toric varieties.
\begin{definition} (see, e.g., \cite{CLS})
A complete normal variety $X$ is said to be a \emph{Gorenstein Fano variety} if the anticanonical divisor $-K_X$ is Cartier and ample.
\end{definition}

Thus Gorenstein Fano varieties are projective. \\

In this subsection, we always assume that $N$ is a lattice of rank $n$.

\begin{definition} (\cite{CLS, Ful93})
A \emph{toric variety} is an irreducible normal variety $X$ containing a torus $T_N\simeq (\C^*)^n$ as a Zariski open subset such that the multiplication of $T_N$ on itself extends to an algebraic action of $T_N$ on $X$.
\end{definition}

By the theory of toric varieties, such a toric variety $X$ is characterized by a fan $\Sigma:=\Sigma(X)$ of strongly convex polyhedral cones in $N_\R$ if $X$ is normally separated.
\begin{definition} (\cite{CLS, Ful93})
A \emph{Convex polyhedral cone} in $N_\R$ is a set of the form
$$\sigma=\mathrm{Cone}(S)=\{\sum_{u\in S}\lambda_uu\,|\,\lambda_u\geq0\}\subseteq N_\R,$$
where $S\subseteq N_\R$ is finite. A convex polyhedral cone $\sigma$ is called \emph{strongly convex} if $\sigma$ contains no linear subspace. It is called rational if $S\subseteq N$.
\end{definition}
In this paper, we always consider strongly convex rational polyhedral cones and call it cone for simplicity.
\begin{definition}
A \emph{fan} in $N_\R$ is a finite collection of cones $\sigma\subseteq N_\R$ such that:

$(a)$ Every $\sigma\in\Sigma$ is a strongly convex rational polyhedral cone.

$(b)$ For all $\sigma\in\Sigma$, each face of $\sigma$ is also in $\Sigma$.

$(c)$ For all $\sigma_1,\sigma_2\in\Sigma$, the intersection $\sigma_1\cap \sigma_2$ is a face of each.
\end{definition}
Let $M:=\Hom_\Z(N,\Z)$ be the dual lattice. For any cone $\sigma$, denote $\sigma^\vee$ the dual cone in $M_\R$ by $$\sigma^\vee=\{m| \langle m,u \rangle\geq0,\mbox{ for any }u\in\sigma\}.$$
$\sigma^\vee$ is also strongly convex if $\sigma$ is strongly convex.

$m=(m_1,\dots,m_n)\in M$ gives a character $\chi^m: T_N\rightarrow \C^*$ defined by
$$\chi^m(t_1,\dots,t_n)=t_1^{m_1}\cdots t_n^{m_n}.$$ In this way, $M$ is the character lattice of $T_N$. So for any semigroup $S\subseteq M$, its \emph{semigroup algebra} is
$$\C[S]=\{\sum_{m\in S}c_m\chi^m\,|\,c_m\in \C\mbox{ and }c_m=0\mbox{ for all but finitely many }m\},$$
with multiplication induced by
$$\chi^m\cdot\chi^{m'}=\chi^{m+m'}.$$

A face $\tau$ of $\sigma$ is given by $\tau=\sigma\cap H_m$, where $m\in\sigma^\vee$ and $H_m=\{u\in N_\R|\langle m,u\rangle=0\}$ is the hyperplane defined by $m$. Let $S_\sigma=\sigma^\vee\cap M$ be the semigroup. Then $S_\tau=S_\sigma+\Z(-m)$ and $\C[S_\tau]$ is the localization $\C[S_\sigma]_{\chi^m}$.

To every cone $\sigma\in\Sigma$, there is an affine variety $U_\sigma:=\Spec(\C[\sigma^\vee\cap M])$ . If $\tau$ is a face of $\sigma$, then we have a natural embedding $U_\tau\hookrightarrow U_\sigma$. Now consider the collection of affine toric varieties $U_\sigma=\Spec(\C[S_\sigma])$, where $\sigma$ runs over all cones in a fan $\Sigma$. Let $\sigma_1$ and $\sigma_2$ be any two of these cones and let $\tau=\sigma_1\cap \sigma_2$. We have an isomorphism
$$g_{\sigma_2,\sigma_1}:(U_{\sigma_1})_{\chi^m}\simeq (U_{\sigma_2})_{\chi^{-m}}$$
which is the identity on $U_\tau$.
Define $X_\Sigma$ to be the variety glued by the affine varieties $U_\sigma$ along the subvarieties $(U_\sigma)_{\chi^m}$, see  (\cite{BG08, CLS, Ful93}).\\

The variety $X_\Sigma$ is a normally separated toric variety for a fan $\Sigma$ in $N_\R$. If $\Sigma$ is complete, then $X_\Sigma$ is compact in the classical topology.

Let $$\sigma^\perp=\{m| \langle m,u \rangle=0,\mbox{ for any }u\in\sigma\}.$$
For any cone $\sigma$ in $N_\R$, points of $U_\sigma$ are in bijective correspondence with semigroup homomorphisms $\gamma: S_\sigma\rightarrow \C$. For each cone $\sigma$, we have a point of $U_\sigma$ defined by
$$m\in S_\sigma\mapsto \left\{ \begin{array}{ccc}1 &m\in S_\sigma\cap \sigma^\perp=\sigma^\perp\cap M\\
0&\mbox{otherwise.}  \end{array} \right.$$
We denote this point by $\gamma_\sigma$ and call it the \emph{distinguished point} corresponding to $\sigma$. Now, by the $T_N$-action on $X_\Sigma$, we see that each cone $\sigma\in\Sigma$ corresponds to a torus orbit
$$O(\sigma)=T_N\cdot \gamma_\sigma\subseteq X_\Sigma.$$
It is known that $O(\sigma)\cong \Hom_\Z(\sigma^\perp\cap M,\C^*)\cong T_{N(\sigma)}$, where $N(\sigma)=N/N_\sigma$ and $N_\sigma$ is the sublattice of $N$ spanned by the points in $\sigma\cap N$, (\cite{CLS, Ful93}).

For a fan $\Sigma$ in $N_\R$, we denote by $\Sigma(i)$ the set of the cones of dimension $i$ in $\Sigma$.
So for any ray $\rho\in\Sigma(1)$, we get a prime $T_N$-invariant divisor $D_\rho=\overline{O(\rho)}$ on $X_\Sigma$.
\begin{example}
$(1)$ Consider the fan $\Sigma$ in $N_\R = \R^2$ in \emph{Figure}-$1$, where $N = \Z^2$ has
standard basis $e_1,e_2$. Then $X_\Sigma\cong\P^2$ and the divisor $D_{u_1}=D_{u_2}=D_{u_0}$ which is also the hyperplane divisor $H$.

$(2)$ Consider the fan $\Sigma$ in $N_\R = \R^2$ in \emph{Figure}-$2$. Then $X_\Sigma\cong \F_a$ which is the Hirzbruch surface if $a\geq0$; in particular, $X_\Sigma\cong \P_1\times\P_1$ if $a=0$. We also get that
the divisors $D_{u_1}=D_{u_3}$, $D_{u_2}=D_{u_4}-aD_{u_3}$, so $D_3,D_4$ is a basis of the Picard group. In the following, we denote $D_{u_3}$ and $D_{u_4}$ by $P$ and $Q$ respectively.
\begin{figure}[h]
\begin{center}
\includegraphics{rationalsurface.1}
\end{center}
\end{figure}
\end{example}

If $X_\Sigma$ is complete and simplicial, then $\CH^\ast(X_\Sigma)_\Q\cong  H^\ast(X_\Sigma,\Q)$ as rings, \cite{BG08, Ful93}.
Let $b_i$ be the rank of $\CH^i(X_\Sigma)$. Then $b_i$ is the $2i$-th Betti number of $X_\Sigma$.

Denote the set of the maximal cones in a fan $\Sigma$ by $\Sigma_{max}$.
\begin{lemma} (\cite{BG08, Ful93})\label{lemma toric variety}
Let $X_\Sigma$ be a smooth complete toric variety. Then the groups $\CH^\ast(X_\Sigma)$ and $K_0(X_\Sigma)\cong G_0(X_\Sigma)$ are free abelian groups of rank
$|\Sigma_{max}|$.
\end{lemma}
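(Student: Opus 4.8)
The plan is to establish both freeness and the rank count by separately analyzing the Chow group $\CH^\ast(X_\Sigma)$ and the Grothendieck group $K_0(X_\Sigma)$, and then tying them together via the results already available. First I would handle $\CH^\ast(X_\Sigma)$. For a smooth complete toric variety, the torus orbit decomposition $X_\Sigma = \bigsqcup_{\sigma \in \Sigma} O(\sigma)$ recalled above gives a filtration of $X_\Sigma$ by closed toric subvarieties whose successive differences are the orbits $O(\sigma) \cong T_{N(\sigma)} \cong (\C^\ast)^{n - \dim\sigma}$. The key input is that each orbit closure $\overline{O(\sigma)}$ defines a class in $\CH_{n-\dim\sigma}(X_\Sigma)$, and that these orbit-closure classes generate the Chow groups while satisfying no relations in the smooth complete case; this is exactly the content of the cited Fulton reference. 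Counting ranks then reduces to counting cones of each dimension. Since $X_\Sigma$ is smooth and complete, each maximal (i.e. $n$-dimensional) cone is simplicial and the total rank $\sum_i b_i$ equals the number of maximal cones $|\Sigma_{max}|$, which follows from the combinatorial identity expressing the number of cones of each dimension in terms of the $f$-vector of the fan together with the fact that the orbit-closure classes form a free basis.

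Next I would treat $K_0(X_\Sigma)$. The cleanest route is to invoke Theorem \ref{lemma chern map}: since $X_\Sigma$ is smooth and complete, the rational Chern character $(\ch_X)_\Q : K_0(X_\Sigma)_\Q \to \CH^\ast(X_\Sigma)_\Q$ is a ring isomorphism, so in particular $K_0(X_\Sigma)_\Q$ and $\CH^\ast(X_\Sigma)_\Q$ have the same $\Q$-dimension, namely $|\Sigma_{max}|$. This immediately gives the rank statement for $K_0$ once we know the rank of $\CH^\ast$. The isomorphism $K_0(X_\Sigma) \cong G_0(X_\Sigma)$ is just the Cartan homomorphism of Theorem \ref{theorem cartan hom}, valid because $X_\Sigma$ is smooth.

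The remaining point is the integral freeness of both groups (the Chern character argument only controls rational dimension). For $\CH^\ast(X_\Sigma)$ this follows directly from the structure theorem for Chow groups of smooth complete toric varieties: the classes of the orbit closures give a $\Z$-basis, so the group is free abelian with no torsion. For $K_0(X_\Sigma)$ one can use the same orbit filtration together with the localization/excision sequences for $K$-theory (or, equivalently, the known presentation of $K_0$ of a smooth toric variety by line bundles $\co(D)$), which show that $K_0(X_\Sigma)$ is free abelian of the same rank.

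The main obstacle I anticipate is the integral (as opposed to rational) statement. The Chern character isomorphism only transfers information after tensoring with $\Q$, so it cannot by itself rule out torsion or pin down an integral basis; the freeness must instead come from the explicit cellular/orbit-filtration structure of toric varieties. Thus the real work is in verifying that the orbit-closure classes furnish a genuine free $\Z$-basis and that the cone-counting gives precisely $|\Sigma_{max}|$ in each case — but since these are established in the cited references (\cite{BG08, Ful93}), the proof reduces to assembling them, and I would simply cite those structural results rather than re-deriving the toric intersection theory.
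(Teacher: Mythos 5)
You are right that the paper offers no proof here (the lemma is quoted from \cite{BG08, Ful93}), so assembling the cited structural results is the appropriate strategy, and most of your assembly is sound: rational rank of $K_0$ via the Chern character of Theorem \ref{lemma chern map}, the identification $K_0\cong G_0$ via the Cartan homomorphism of Theorem \ref{theorem cartan hom}, and integral freeness of $K_0$ from the filtration plus localization sequences (generation by $|\Sigma_{max}|$ classes together with rational rank $|\Sigma_{max}|$ forces freeness). However, your central mechanism for the Chow group is genuinely wrong: the orbit-closure classes $[\overline{O(\sigma)}]$, $\sigma\in\Sigma$, do generate $\CH^\ast(X_\Sigma)$, but they satisfy many relations even when $X_\Sigma$ is smooth and complete, and ``no relations'' is emphatically not the content of the cited Fulton reference. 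If they were a free basis, the rank would be the total number of cones $|\Sigma|=\sum_j|\Sigma(j)|$, not $|\Sigma_{max}|$. Already for $X_\Sigma=\P^2$ (the paper's own example): the three ray divisors $D_{u_0},D_{u_1},D_{u_2}$ are all rationally equivalent to the hyperplane class $H$, so $\CH^1(\P^2)\cong\Z$ while there are three rays; the total rank is $3=|\Sigma_{max}|$, whereas $|\Sigma|=7$. The relations come from divisors of characters, $\div(\chi^m)=\sum_{\rho}\langle m,u_\rho\rangle D_\rho$, and from the analogous relations on each orbit closure, so your premise would make your own rank count inconsistent.

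What the references actually prove is that a smooth complete $X_\Sigma$ admits a cellular decomposition with exactly one cell for each \emph{maximal} cone: either via a shelling order $\sigma_1,\dots,\sigma_m$ of $\Sigma_{max}$, or equivalently as the attracting cells (Bialynicki--Birula decomposition) of the fixed points $\gamma_\sigma$, $\sigma\in\Sigma_{max}$, under a generic one-parameter subgroup $\lambda_v$. The closures of these cells furnish a $\Z$-basis of $\CH^\ast(X_\Sigma)$, which is what simultaneously yields freeness (no torsion) and the rank $|\Sigma_{max}|$; your alternating-sum remark is consistent with this, since by Lemma \ref{lemma Betti number} one has $\sum_k b_k=\sum_k\sum_{i\geq k}(-1)^{i-k}\binom{i}{k}|\Sigma(n-i)|=|\Sigma(n)|$ because $\sum_{k=0}^{i}(-1)^{i-k}\binom{i}{k}=0$ for $i>0$, but this identity only confirms the count and cannot replace the cell structure, which is the ingredient that rules out both torsion and excess relations. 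Once the cell decomposition is substituted for the false ``orbit closures with no relations'' claim, the rest of your argument (including the $K_0$ part via the same filtration) goes through.
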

\begin{theorem}[hard Lefchetz theorem](\cite{BBD82, GM83}) Let $X$ be a normal projective variety of dimension $n$ and $\omega\in H^2(X,\Q)$ be the class of an ample divisor on $X$. Then we have the following:

$(1)$ For all $0\leq i\leq n$, multiplication by $\omega^{n-i}$ defines an isomorphism $$\omega^{n-i}: H^i(X)_\Q\rightarrow  H^{2n-i}(X)_\Q.$$

$(2)$ For $0\leq i< n$, multiplication by $\omega$
$$\omega: H^i(X)_\Q\rightarrow  H^{i+2}(X)_\Q$$ is injective.
\end{theorem}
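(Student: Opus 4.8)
The plan is to deduce both (1) and (2) from the representation theory of $\mathfrak{sl}_2$, once the Lefschetz operator $L=\omega\cup(-)$ has been fitted into an $\mathfrak{sl}_2$-action on $H^\ast(X,\Q)$. First I would reduce to the case that $\omega$ is very ample: replacing $\omega$ by a positive multiple $c\,\omega$ only rescales $\omega^{n-i}$ by $c^{n-i}$, so it changes neither the isomorphism asserted in (1) nor the injectivity asserted in (2). Grading $H^\ast(X,\Q)$ so that $H^i(X)_\Q$ sits in weight $i-n$, the operator $L$ raises weight by $2$, and the whole content of the theorem is that $L$ together with a suitable lowering operator $\Lambda$ and the grading operator satisfy the $\mathfrak{sl}_2$-relations. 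Granting such an action, statement (1) says that $L^{n-i}$ carries the weight $(i-n)$ space isomorphically onto the weight $(n-i)$ space, and statement (2) says that $L$ is injective on every strictly negative weight space (i.e. for $i<n$); both are immediate from the structure of finite-dimensional $\mathfrak{sl}_2$-modules, where on each irreducible summand the raising operator $L^k$ restricts to an isomorphism from weight $-k$ to weight $+k$ and is injective below the top weight.

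The substance of the proof is therefore the construction of this $\mathfrak{sl}_2$-action, and here the smooth case is classical. When $X$ is smooth one chooses a Kähler form representing $\omega$, defines $\Lambda$ as the adjoint of $L$ with respect to the induced Hodge metric, and obtains the $\mathfrak{sl}_2$-relations from the Kähler identities; hard Lefschetz then holds verbatim. This already disposes of every variety to which the theorem is applied later in the paper, namely the smooth complete toric varieties of Lemma~\ref{lemma toric variety}, and with minor care (orbifold Hodge theory) also of simplicial complete toric varieties, which are rational homology manifolds. For the remaining normal $X$ I would pass to a resolution of singularities $\pi:\widetilde{X}\to X$ with $\widetilde{X}$ smooth projective. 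The class that must be controlled on $\widetilde{X}$ is $\pi^\ast\omega$, which is compatible with $\omega$ through the projection formula $\pi_\ast\big((\pi^\ast\omega)^{n-i}\cup\pi^\ast\alpha\big)=\omega^{n-i}\cup\alpha$ and the splitting $\pi_\ast\pi^\ast=\mathrm{id}$ coming from birationality.

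The obstacle, and the reason the smooth argument does not extend naively, is that $\pi^\ast\omega$ is only nef and big, not ample, on $\widetilde{X}$, so the Kähler identities do not by themselves furnish hard Lefschetz for $L_{\pi^\ast\omega}$. This is exactly where I would invoke the decomposition theorem and relative hard Lefschetz theorem of \cite{BBD82}, resting on the intersection-homology foundations of \cite{GM83}: applied to the proper morphism $\pi$, these equip the cohomology with the Lefschetz $\mathfrak{sl}_2$-action adapted to $\omega$, after which feeding the resulting module into the representation-theoretic argument of the first paragraph produces (1) and (2) for $(X,\omega)$. The delicate bookkeeping is to keep the Lefschetz operator compatible with the perverse and weight filtrations governing the splitting, so that the $\mathfrak{sl}_2$-structure and the maps $\pi^\ast,\pi_\ast$ are simultaneously respected; once the decomposition theorem is in hand this step is formal. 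Thus the genuine difficulty is entirely absorbed into the deep input of \cite{BBD82, GM83}, and the rest of the argument is the elementary reduction to $\mathfrak{sl}_2$-representation theory sketched above.
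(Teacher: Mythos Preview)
The paper does not prove this theorem at all; it merely states it with a citation to \cite{BBD82, GM83} and adds the one-line remark that (2) follows from (1). Your proposal therefore goes far beyond anything the paper does, and the only point of comparison is that your $\mathfrak{sl}_2$ framework does recover the implication (1)$\Rightarrow$(2) that the paper singles out.

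There is, however, a genuine gap in your sketch for the singular case, and it reflects a problem with the statement itself. As written, for ordinary cohomology $H^\ast(X,\Q)$ of an arbitrary normal projective variety, hard Lefschetz is simply false: already Poincar\'e duality can fail, so $H^i(X)_\Q$ and $H^{2n-i}(X)_\Q$ need not have the same dimension. What \cite{BBD82, GM83} actually prove is hard Lefschetz for \emph{intersection} cohomology $IH^\ast(X,\Q)$. Your resolution argument cannot repair this: from the decomposition theorem one extracts $IH^\ast(X)$ as a summand of $H^\ast(\widetilde X)$ on which the Lefschetz $\mathfrak{sl}_2$ acts, but the summand picked out by $\pi^\ast$ and $\pi_\ast$ on ordinary cohomology is not stable under a lowering operator $\Lambda$, and the relative hard Lefschetz for $\pi$ does not furnish the missing isomorphisms on $H^\ast(X)$.

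For the applications in the paper this is harmless: the theorem is only invoked for smooth (or simplicial) complete toric varieties, where $H^\ast=IH^\ast$ and your smooth/orbifold argument via the K\"ahler identities is exactly the standard proof. So your treatment of the smooth case, and your derivation of (2) from (1), are correct and suffice for everything the paper needs; only the extension to general normal $X$ should be either dropped or reformulated in terms of intersection cohomology.
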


It is easy to see that (2) follows from (1). Generally, for any variety $X$, an element in $ H^2(X,\Q)$ which satisfies condition (1) of the hard Lefchetz theorem is called a \emph{Lefschetz element}.\\

\subsection{Toric varieties}

We have a simple observation.
\begin{lemma}\label{lemma nilpotent matrix}
Let $A$ be an $m\times m$ nilpotent matrix over $\C$. Then the number and the dimensions of the Jordan blocks in the Jordan canonical form of $\mathrm{exp}(A)$ are the same as $A$, where $\mathrm{exp}(A)=\sum_{i=0}^\infty \frac{A^i}{i!}$.
\end{lemma}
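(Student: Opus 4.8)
The plan is to exploit the fact that, since $A$ is nilpotent, $\mathrm{exp}(A)$ is a finite sum and differs from the identity by a nilpotent matrix built out of $A$ itself. Writing $A^k=0$ for some $k$, I would set
$$\mathrm{exp}(A)=I+N,\qquad N=\sum_{i\geq 1}\frac{A^i}{i!}=A\,U,\qquad U=\sum_{i\geq1}\frac{A^{i-1}}{i!}=I+\tfrac12 A+\cdots.$$
The matrix $U$ is a polynomial in $A$, so it commutes with $A$; moreover $U=I+(\text{nilpotent})$ is unipotent, hence invertible. Likewise $N^j=(AU)^j=A^jU^j$, so $N$ is nilpotent and all eigenvalues of $\mathrm{exp}(A)=I+N$ equal $1$.

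Next I would reduce the assertion about $\mathrm{exp}(A)$ to one about the nilpotent matrix $N$. Passing from $N$ to $I+N$ sends every diagonal entry of the Jordan form from $0$ to $1$ but leaves the block decomposition untouched: a Jordan chain for $N$ (eigenvalue $0$) is a Jordan chain for $I+N$ (eigenvalue $1$) of the same length. Thus the number and the sizes of the Jordan blocks of $\mathrm{exp}(A)$ are exactly those of $N$, and it suffices to prove that $N$ and $A$ have the same Jordan type, i.e. that $\bigoplus J(0,s_i)$ is the Jordan form of $N$ precisely when it is the Jordan form of $A$.

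The central step is then the standard fact that the Jordan type of a nilpotent matrix is completely determined by the ranks of its powers: the number of Jordan blocks of size at least $j$ equals $\operatorname{rank}(A^{j-1})-\operatorname{rank}(A^{j})$. Because $U$ commutes with $A$ I have $N^j=A^jU^j$, and since $U^j$ is invertible, multiplication by it preserves rank; hence $\operatorname{rank}(N^j)=\operatorname{rank}(A^j)$ for every $j\geq 0$. The two matrices therefore have identical rank sequences, and so identical Jordan types, which completes the argument.

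The only real content lies in recognizing the factorization $N=AU$ with $U$ invertible and commuting with $A$; once this is in hand the rank computation is immediate. I anticipate no genuine obstacle, the only points needing a line of justification being the invertibility of $U$ (which follows from its unipotence) and the fact that the eigenvalue shift $N\mapsto I+N$ neither merges nor splits Jordan blocks.
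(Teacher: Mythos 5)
Your proof is correct, but it takes a genuinely different route from the paper's. The paper first conjugates $A$ into Jordan form, reducing the claim to a single nilpotent block $J(0,j)$; it then writes out $\exp(J(0,j))$ explicitly as an upper triangular matrix with ones on the diagonal and superdiagonal, observes that $\exp(J(0,j))-I$ has rank $j-1$, so the eigenspace for the (unique) eigenvalue $1$ is one-dimensional, and concludes that $\exp(J(0,j))$ is a single block $J(1,j)$. You instead argue globally and basis-free: the factorization $\exp(A)-I=AU$, with $U=I+\tfrac12 A+\cdots$ a unipotent (hence invertible) polynomial in $A$ commuting with $A$, yields $\operatorname{rank}\bigl((\exp(A)-I)^j\bigr)=\operatorname{rank}(A^j)$ for every $j$, and the standard rank-sequence characterization of nilpotent Jordan type (the number of blocks of size at least $j$ equals $\operatorname{rank}(M^{j-1})-\operatorname{rank}(M^j)$) then forces $N=\exp(A)-I$ and $A$ to have identical Jordan types, after which the eigenvalue shift $N\mapsto I+N$ is harmless since $(I+N)-I=N$ preserves Jordan chains verbatim. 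The paper's computation is more concrete and elementary, though its key rank count is left as ``easy to see''; your argument trades that explicit matrix for the rank-sequence lemma, avoids choosing any Jordan basis, and generalizes at no extra cost: the identical reasoning shows that $f(A)$ has the same block structure as $A$ (eigenvalues shifted to $f(0)$) for any power series $f$ with $f'(0)\neq 0$, of which $f=\exp$ is just one instance.
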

\begin{proof}
$A$ is a nilpotent matrix, then there exists a $m\times m$ invertible matrix $P$ such that $PAP^{-1}=B=\mathrm{diag}(J_1,\dots,J_s)$, where $J_i$'s are the Jordan blocks of $A$, and the eigenvalues of $A$ are zeros. $P\mathrm{exp}(A)P^{-1}=\mathrm{exp}(PAP^{-1})=\mathrm{exp}(B)=\mathrm{diag}(B_1,\dots,B_s)$, where $B_i=\mathrm{exp}(J_i)$ for $1\leq i\leq s$. So we only need prove that $B_i$ has only one Jordan block.

It is easy to see that
$$B_i=\left(\begin{array}{ccccccc}1 &1&\frac{1}{2}&\cdots&\frac{1}{(j_i-1)!}\\
                                  &  1& 1&\cdots& \frac{1}{(j_i-2)!}\\
                                  &&1&\cdots&\frac{1}{(j_i-3)!}\\
                                  &&&\ddots&\vdots\\
                                  &&&&1
                                    \end{array}\right),$$
where $j_i$ is the dimension of $B_i$. All the eigenvalues of $B_i$ are $1$, and it is easy to see that $B_i-I_{j_i}$ has rank $j_i-1$, where $I_{j_i}$ is the identity matrix of dimension $j_i$. So $B_i$ has only one eigenvector $(1,0,\dots,0)$, the proof of this lemma is complete.
\end{proof}

Now we can prove our main result.

\begin{theorem}\label{main theorem 3}
Let $X_\Sigma$ be an n-dimensional smooth complete toric variety with ample canonical divisor or anticanonical divisor. Let $b_i$ be the rank of $\CH^{i}(X_\Sigma)$ for $0\leq i\leq n$ and $b_{-1}=0$. Then we have the following:

$(a)$ The number of Jordan blocks of the Jordan canonical form of the Coxeter transformation is $b_{[\frac{n}{2}]}$.

$(b)$ The maximal dimension of the Jordan blocks in the Jordan canonical form of the Coxeter transformation is $n+1$.

$(c)$ The Jordan canonical form of the Coxeter transformation has $b_i-b_{i-1}$ Jordan blocks of dimension $n-2i+1$ for any $0\leq i\leq [\frac{n}{2}]$, and these are all its Jordan blocks.

$(d)$ If the Jordan canonical form of the Coxeter transformation has $t_i$ $(t_i>0)$ Jordan blocks of dimension $d_i$ for $1\leq i\leq s$ and $d_1> d_2>\cdots >d_s$, then

\quad $(1)$ $t_1=1$, $n=d_1-1$.

\quad $(2)$
\begin{equation*}\left\{
\begin{split}
b_{\frac{d_1-d_i}{2}} &=b_{\frac{d_1-d_i+2}{2}} =\cdots =b_{\frac{d_1-d_{i+1}-2}{2}}=t_1+\cdots+t_i\quad \mbox{if }1\leq i\leq s-1, \\
b_{\frac{d_1-d_s}{2}} &=b_{\frac{d_1-d_s+2}{2}} =\cdots =b_{[\frac{n}{2}]}=t_1+\cdots+t_s\quad \mbox{if }i=s.
\end{split}\right.
\end{equation*}
\end{theorem}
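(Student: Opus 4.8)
The plan is to build directly on the computation already made in the proof of Theorem \ref{main theorem 1}. There the commutative diagram shows that, under the ring isomorphism $(\ch_X)_\Q$, the Coxeter transformation $\Phi$ equals $(-1)^{n-1}\Psi$, where $\Psi$ is multiplication by $\ch_X(\omega_X)$ on $\CH^*(X_\Sigma)_\Q$. Scaling a matrix by the nonzero constant $(-1)^{n-1}$ conjugates each block $J(\lambda,s)$ to $J((-1)^{n-1}\lambda,s)$, so it changes neither the number nor the dimensions of the Jordan blocks; hence it suffices to read off the Jordan structure of $\Psi$. Writing $L$ for the operator ``multiply by $K_X$'' on $\CH^*(X_\Sigma)_\Q$, I would note that $\ch_X(\omega_X)=\exp(K_X)$ in the Chow ring, so $\Psi=\exp(L)=\sum_{k\ge0}L^k/k!$; since $K_X^{\,n+1}\in\CH^{n+1}(X_\Sigma)=0$, the operator $L$ is nilpotent with $L^{n+1}=0$. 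By Lemma \ref{lemma nilpotent matrix}, $\exp(L)$ and $L$ have the same number of Jordan blocks and the same block dimensions, so the problem reduces entirely to the Jordan structure of $L$.

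Next I would bring in the geometry. Being smooth, complete and equipped with an ample (anti)canonical divisor, $X_\Sigma$ is projective, and for smooth complete toric varieties $\CH^*(X_\Sigma)_\Q\cong H^*(X_\Sigma,\Q)$ as graded rings. Thus $\pm K_X$ is ample, hence a Lefschetz element by the hard Lefschetz theorem; replacing $K_X$ by $-K_X$ only rescales $L$ by $-1$ and does not affect the block data, so I may assume $L$ is multiplication by a Lefschetz class. The hard Lefschetz isomorphisms $L^{\,n-2i}\colon\CH^i(X_\Sigma)_\Q\to\CH^{n-i}(X_\Sigma)_\Q$ for $0\le i\le[\tfrac n2]$ then endow $\CH^*(X_\Sigma)_\Q$ with the structure of a finite-dimensional $\mathfrak{sl}_2$-representation in which $L$ is the raising operator.

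The heart of the argument is the Lefschetz (primitive) decomposition. Setting $P^i=\ker\bigl(L^{\,n-2i+1}\colon\CH^i\to\CH^{n-i+1}\bigr)$ for $0\le i\le[\tfrac n2]$, the injectivity part of hard Lefschetz gives $\CH^i=P^i\oplus L(\CH^{i-1})$ in this range, whence $b_i=\dim P^i+b_{i-1}$, i.e. $\dim P^i=b_i-b_{i-1}$ (with $b_{-1}=0$). Each nonzero primitive class $p\in P^i$ generates a string $p,Lp,\dots,L^{\,n-2i}p$ with $L^{\,n-2i}p\neq0$ and $L^{\,n-2i+1}p=0$, spanning one irreducible summand of dimension $n-2i+1$ on which $L$ acts as a single nilpotent Jordan block; and these strings exhaust $\CH^*(X_\Sigma)_\Q$. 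Counting gives exactly $b_i-b_{i-1}$ blocks of dimension $n-2i+1$ for each $0\le i\le[\tfrac n2]$ and no others, which is $(c)$. Part $(a)$ follows by telescoping $\sum_{i=0}^{[n/2]}(b_i-b_{i-1})=b_{[n/2]}$, and $(b)$ from the $i=0$ block of maximal size $n+1$, which is present since $b_0=1$. Finally $(d)$ is a combinatorial inversion of $(c)$: the distinct sizes are $d_j=n-2c_j+1$ at the ``jump'' indices $c_j=(d_1-d_j)/2$, with $d_1=n+1$ and $t_1=b_0=1$; the vanishing $b_c-b_{c-1}=0$ at non-jump indices forces $b_c$ to stay constant between consecutive $c_j$, and the telescoped partial sums yield the displayed equalities.

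I expect the main obstacle to be the clean justification of the primitive-string-to-Jordan-block correspondence, that is, verifying through the $\mathfrak{sl}_2$-structure supplied by hard Lefschetz that $L$ restricted to each primitive string is a single nilpotent Jordan block and that the strings decompose all of $\CH^*(X_\Sigma)_\Q$. Equivalently, one must confirm that the ranks of the powers $L^k$ forced by the hard Lefschetz isomorphisms agree with those of the claimed decomposition; this is precisely where the ampleness hypothesis is indispensable, since without it $K_X$ need not be a Lefschetz element and the block sizes could genuinely differ.
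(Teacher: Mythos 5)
Your proposal is correct and follows essentially the same route as the paper: reduce via $(\ch_X)_\Q$ and Lemma \ref{lemma nilpotent matrix} to the nilpotent operator $L$ of multiplication by $K_X$ (handling the anticanonical case by a sign that does not affect block data), then apply the hard Lefschetz theorem to produce exactly $b_i-b_{i-1}$ Jordan strings of length $n-2i+1$, and deduce $(a)$, $(b)$, $(d)$ by the same telescoping and uniqueness-of-Betti-numbers considerations. The only cosmetic difference is that you invoke the standard primitive Lefschetz decomposition $\CH^i=P^i\oplus L(\CH^{i-1})$, whereas the paper builds the same strings by hand, lifting a basis of $\ker\bigl(\theta|_{\CH^{n-1}(X_\Sigma)_\Q}\bigr)$ back through the Lefschetz isomorphisms and confirming completeness by the dimension count $\sum_{i=0}^{[\frac{n}{2}]}(b_i-b_{i-1})(n-2i+1)=\sum_{i=0}^{n}b_i$.
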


\begin{proof}
For simplicity, we denote $K_{X_\Sigma}$ by $K_X$, which is the canonical divisor of $X_\Sigma$.

We only need consider the case that the canonical divisor $K_X$ is ample, since in the case that the anticanonical divisor is ample, we can consider the inverse transformation of the Coxeter transformation and the Jordan normal form of a transformation with eigenvalues $1$ is the same as its inverse.

Since $X_\Sigma$ is complete and simplicial, we get that $ H^{2i}(X_\Sigma,\Q)\cong \CH^i(X_\Sigma,\Q)$ and $ H^{2i+1}(X_\Sigma,\Q)=0$ for $0\leq i\leq n$. So by hard Lefschetz theorem, we know that $K_{X}^{n-2i}:\CH^i(X_\Sigma)_\Q\rightarrow \CH^{n-i}(X_\Sigma)_\Q$ is isomorphic for $0\leq i<[\frac{n}{2}]$.\\

Denote by $\theta: \CH^\ast(X_\Sigma)_\Q\rightarrow \CH^\ast(X_\Sigma)_\Q$ the linear transformation induced by the intersection product with $K_X$. Then $\theta$ is a nilpotent transformation and the Coxeter transformation is $\mathrm{exp}(\theta)$. By Lemma \ref{lemma nilpotent matrix}, we only need prove the results for $\theta$.\\

We know that $\CH^0(X_\Sigma)_\Q$ is one-dimensional.
Let $x^0_1$ be a generator of $\CH^0(X_\Sigma)_\Q$. Then the subspace spanned by $\{\theta^ix^0_1|0\leq i\leq n\}$ is an invariant subspace of $\theta$, its dimension is $n+1$. Since $\theta^ix^0_1\in\CH^i(X_\Sigma)$ and $\CH^i(X_\Sigma)\cap \CH^j(X_\Sigma)=0$ for any $i\neq j$, it is easy to see that $\theta^nx^0_1$ is an eigenvector of $\theta$ and $\{\theta^ix^0_1|0\leq i\leq n\}$ is the longest chain which contains $\theta^nx^0_1$, so the Jordan block corresponding to $\theta^nx^0_1$ is of dimension $n+1$.\\

From hard Lefschetz theorem, we know that $\theta:\CH^{n-1}(X_\Sigma)_\Q\rightarrow \CH^n(X_\Sigma)_\Q$ is surjective, so $\Im(\theta|_{\CH^{n-1}(X_\Sigma)_\Q})=\mathrm{Span}(\theta^n x^0_1):=\Q(\theta^n x^0_1)$. Since $\dim \CH^{n}(X_\Sigma)_\Q=1$, we can choose a basis $x^{n-1}_1,\dots,x^{n-1}_{b_{n-1}-1}$ of $\ker(\theta|_{\CH^{n-1}(X_\Sigma)_\Q})$, they are also eigenvectors of $\theta$. Since $\theta^{n-2}:\CH^1(X_\Sigma)_\Q\rightarrow \CH^{n-1}(X_\Sigma)_\Q$ is an isomorphism, there exist unique $x^1_1,\dots,x^1_{b_1-1}$ in $\CH^1(X_\Sigma)_\Q$ such that $\theta^{n-2}(x^1_{i})=x^{n-1}_i$ for $1\leq i\leq (b_1-1)$.
Thus for any $1\leq i\leq (b_1-1)$, $\{\theta^j x^1_i|0\leq j\leq (n-2) \}$ spans an invariant subspace of $\theta$, its dimension is $n-1$ since $\theta^j x^1_i$ are linear independent for fixed $i$. It is easy to see that $\{\theta^j x^1_i|0\leq j\leq (n-2) \}$ is also the longest chain which contains $x^{n-1}_i$. So we get that the canonical form of $\theta$ has $b_1-b_0$ Jordan blocks of dimension $n-1$. Do it inductively, one get that the canonical form of $\theta$ has $b_i-b_{i-1}$ Jordan blocks of dimension $n-2i+1$ for any $0\leq i\leq [\frac{n}{2}]$. To prove that these are all its Jordan blocks, we only need check the following equality:
$$\sum_{i=0}^{[\frac{n}{2}]}(b_i-b_{i-1})(n-2i+1)=\sum_{i=0}^n b_i.$$\\
Notice that $b_i=b_{n-i}$ from the Poincare duality or hard Lefschetz theorem.

Case 1: $n$ is odd. The left hand side is equal to $\sum_{i=0}^{[\frac{n}{2}]}2b_i$. The right hand side is equal to $$(n-2[\frac{n}{2}]+1)b_{[\frac{n}{2}]}+\sum_{i=0}^{[\frac{n}{2}]-1}((n-2i+1)-(n-2(i-1)+1))b_i=
2b_{[\frac{n}{2}]}+\sum_{i=0}^{[\frac{n}{2}]-1}2b_i$$ since $b_{-1}=0$. The equality is valid.

Case 2: $n$ is even. This case is similar to Case 1. The left hand side is equal to $\sum_{i=0}^{[\frac{n}{2}]-1}2b_i+b_{[\frac{n}{2}]}$. The right hand side is equal to $$(n-2[\frac{n}{2}]+1)b_{[\frac{n}{2}]}+\sum_{i=0}^{[\frac{n}{2}]-1}((n-2i+1)-(n-2(i-1)+1))b_i=
b_{[\frac{n}{2}]}+\sum_{i=0}^{[\frac{n}{2}]-1}2b_i$$ since $b_{-1}=0$. The equality is valid. So we get (c). Now, (a) and (b) are trivial.\\

For (d), if the Jordan canonical form of the Coxeter transformation has $t_i$ Jordan blocks of dimension $d_i$ for $1\leq i\leq s$ and $d_1> d_2>\cdots> d_s$, then from (a), (b) and (c), it is easy to see that $t_1=1$, $n=d_1-1$. For the last statement, by (c), we know that $d_i=n-2j_i+1$ ($1\leq i\leq s$) for some integer $j_i\geq0$. The formula in (2) make sense since $d_1-d_i$ is even for any $1\leq i\leq s$. If the Betti numbers $b_i$ are as (2) shows, then we can check directly that the Jordan canonical form of the Coxeter transformation has $t_i$ $(t_i>0)$ Jordan blocks of dimension $d_i$. So we only need prove that the Betti numbers are unique.
If there exist $b'_i$ such that they also satisfy the requirement, then $b_0=b'_0=1$. We denote
$$k=\max\{j|b_i=b'_i, \mbox{for any } 1\leq i\leq j\leq [\frac{n}{2}]\},$$ where $n=d_1-1$. So $k\geq 1$. If $k\neq [\frac{n}{2}]$, then either $b_{k+1}\neq b_k$ or $b'_{k+1}\neq b'_k$. For simplicity, we assume that $b_{k+1}\neq b_k$. Then the Jordan canonical form has $b_{k+1}-b_k$ Jordan blocks of dimension $n-2k-2$. By (c), we must also get that $b'_{k+1}-b'_k=b_{k+1}-b_k$. But $b_k=b'_k$, so $b'_{k+1}=b_{k+1}$. Contradict to our assumption. So the Betti numbers $b_i$ are unique.

The proof is complete.
\end{proof}
\begin{remark}\label{remark lefschetz elment}
From the proof, we know that the above theorem is also valid if the canonical bundle of a smooth complete toric variety is a Lefschetz element. So an interesting question is when this condition holds.
\end{remark}

The following example shows that there exist smooth complete varieties of dimension greater than $2$ with neither ample anonical nor ample anticanonical divisor also have the same properties as Theorem \ref{main theorem 2} says.
\begin{example}
Let $X$ be the blow-up of $\P^3$ at two fixed points under the action of the torus. It is easy to check that neither the canonical nor the anticanonical divisor of $X$ is ample. Notice that $X$ is also a toric variety with fan $\Sigma$ generated by the columns of the following matrix
\[\left(\begin{array}{cccccccc} 1&0&0&-1&-1&0\\
0&1&0&-1&0&-1\\
0&0&1&-1&0&0  \end{array}\right).\]
Denote the cone generated by the $i$-th column by $\tau_i$. Then the maximal cones in $\Sigma$ are $\mathrm{Cone}(\tau_1,\tau_2,\tau_3)$, $\mathrm{Cone}(\tau_1,\tau_2,\tau_4)$, $\mathrm{Cone}(\tau_2,\tau_4,\tau_5)$, $\mathrm{Cone}(\tau_3,\tau_4, \tau_5)$, $\mathrm{Cone}(\tau_2,\tau_3,\tau_5)$, $\mathrm{Cone}(\tau_1,\tau_3,\tau_6)$, $\mathrm{Cone}(\tau_3,\tau_4,\tau_6)$ and $\mathrm{Cone}(\tau_1,\tau_4,\tau_6)$.

Let $D_i$ be the divisor corresponding to the one-dimensional cone generated by the $i+1$ column for $0\leq i\leq 5$.
Then the Chow group $\CH^\ast(X)_\Q$ is generated by $D_3,D_4,D_5$  with the relations $D_3D_4+D_4^2=0$, $D_3D_5+D_5^2=0$, $D_4D_5=0$, $D_3^3+D_3^2D_5=0$, $D_3^3+D_3^2D_4=0$.
So $X, D_3,D_4,D_5, D_3^2,D_4^2,D_5^2, D_3^3$ is a basis of the Chow group.
It is easy to get that $-K_X=4D_3+2D_4+2D_5$. One can check that the Jordan canonical form of the Coxeter transformation is $J(1,4)\oplus J(1,2)\oplus J(1,2)$.
\end{example}

\begin{lemma} (\cite{CLS})\label{lemma Betti number}
Let $\Sigma$ be a complete simplicial fan in $N_\R\cong\R^n$. Then the Betti numbers of $X_\Sigma$ are given by
$$b_k(X_\Sigma)=\sum_{i=k}^n(-1)^{i-k}\left(\begin{array}{cc}i\\k \end{array} \right)|\Sigma(n-i)|,\quad(4\mbox{-}1)$$
where $b_k(X_\Sigma)$ is the $2k$-th Betti number.
\end{lemma}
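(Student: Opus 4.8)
The plan is to read off the Betti numbers from the torus-orbit decomposition of $X_\Sigma$ together with the fact, recorded above, that for a complete simplicial fan one has $H^{2i+1}(X_\Sigma,\Q)=0$ and $H^{2i}(X_\Sigma,\Q)\cong\CH^i(X_\Sigma)_\Q$, so that the rational cohomology is concentrated in even degrees. The starting point is the orbit-cone correspondence already used in this section: $X_\Sigma=\bigsqcup_{\sigma\in\Sigma}O(\sigma)$ is a disjoint union of locally closed torus orbits, and $O(\sigma)\cong T_{N(\sigma)}\cong(\C^\ast)^{\,n-\dim\sigma}$. I would exploit that this is a decomposition into locally closed subvarieties, each an algebraic torus, and extract the Betti numbers by a motivic weight-polynomial count.

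Concretely, first I would pass to the class of $X_\Sigma$ in the Grothendieck ring of varieties, where additivity over locally closed pieces gives $[X_\Sigma]=\sum_{\sigma\in\Sigma}[(\C^\ast)^{\,n-\dim\sigma}]=\sum_{j=0}^n|\Sigma(j)|\,(\mathbb{L}-1)^{\,n-j}$, using $\mathbb{L}=[\mathbb{A}^1]$ and $[(\C^\ast)^m]=(\mathbb{L}-1)^m$. Applying the virtual Poincar\'e (weight) polynomial, which is additive over such decompositions and sends $\mathbb{L}\mapsto t^2$, yields $P(X_\Sigma;t)=\sum_{j=0}^n|\Sigma(j)|\,(t^2-1)^{\,n-j}$. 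The second, decisive step is to argue that for a complete simplicial $X_\Sigma$ this virtual Poincar\'e polynomial equals the honest one: because the rational cohomology is pure and vanishes in odd degrees, the coefficient of $t^{2k}$ in $P(X_\Sigma;t)$ is exactly $\dim H^{2k}(X_\Sigma,\Q)=b_k$.

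It then remains to expand and reindex. Setting $u=t^2$ and expanding $(u-1)^{n-j}=\sum_{k}\binom{n-j}{k}(-1)^{n-j-k}u^k$, the coefficient of $u^k$ in $\sum_{j}|\Sigma(j)|(u-1)^{n-j}$ is $\sum_{j\le n-k}|\Sigma(j)|\binom{n-j}{k}(-1)^{n-j-k}$; substituting $i=n-j$ turns this into $\sum_{i=k}^n(-1)^{i-k}\binom{i}{k}|\Sigma(n-i)|$, which is the asserted formula.

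The main obstacle is not this bookkeeping but the justification that the additive virtual Poincar\'e polynomial computes the true Betti numbers: a priori $(\C^\ast)^m$ carries odd cohomology and the orbits are not cells, so additivity of $P(\,\cdot\,;t)$ fails piece by piece. The honest content is the purity of $H^\ast(X_\Sigma,\Q)$ for complete simplicial toric varieties, which is where the hypotheses enter and which I would invoke through the stated isomorphism $\CH^\ast\cong H^\ast$ together with the vanishing of odd cohomology. A self-contained alternative that avoids mixed Hodge theory is to refine the orbit decomposition to a Bialynicki--Birula affine paving: choosing a generic one-parameter subgroup $\lambda:\C^\ast\to T_N$, whose fixed points are the torus-fixed points indexed by $\Sigma(n)$, produces one affine cell per maximal cone, and summing $t^{2d(\sigma)}$ over these cells again gives $P(X_\Sigma;t)$; this version trades the purity input for a combinatorial identification of the cell dimensions with the fan data, after which the same expansion applies.
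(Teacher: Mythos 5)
Your argument is correct, and it is worth noting that the paper itself offers no proof of this lemma: it is quoted from \cite{CLS}, where (in \S 12.3 of that book) the proof is essentially the one you give --- additivity of the virtual Poincar\'e polynomial over the orbit-cone stratification $X_\Sigma=\bigsqcup_{\sigma\in\Sigma}O(\sigma)$ with $O(\sigma)\cong(\C^\ast)^{n-\dim\sigma}$, the evaluation $P\bigl((\C^\ast)^m;t\bigr)=(t^2-1)^m$, and then purity plus odd-degree vanishing of $H^\ast(X_\Sigma,\Q)$ for complete simplicial toric varieties to identify the virtual with the honest Poincar\'e polynomial. Your bookkeeping checks out: the coefficient of $u^k$ in $\sum_{j}|\Sigma(j)|(u-1)^{n-j}$ is $\sum_{j}(-1)^{n-j-k}\binom{n-j}{k}|\Sigma(j)|$, and the substitution $i=n-j$ gives exactly $(4\mbox{-}1)$, the terms with $i<k$ vanishing since $\binom{i}{k}=0$. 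You also correctly identify the one genuinely nontrivial input, purity, which cannot be read off from additivity alone; invoking it through the stated isomorphism $\CH^\ast(X_\Sigma)_\Q\cong H^\ast(X_\Sigma,\Q)$ together with odd vanishing is legitimate (cohomology spanned by algebraic cycle classes is of Hodge--Tate type $(k,k)$ and pure, which is the content of the relevant theorem in \cite{CLS}). One caution on your proposed alternative: the Bialynicki--Birula decomposition yields genuine affine cells only for \emph{smooth} $X_\Sigma$; in the simplicial case the pieces are finite quotients of affine spaces, so one must still run a $\Q$-coefficient additivity argument (compactly supported cohomology and evenness) rather than cite a literal cell paving --- it therefore does not fully dispense with the purity-type input, though it works for the smooth subcase. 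As applied in the paper (via the corollary following the lemma, which assumes $\Sigma$ smooth), either route suffices.
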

Then we have the following corollary of Theorem \ref{main theorem 3}.
\begin{corollary}
Let $\Sigma$ be a complete smooth fan in $N_\R\simeq \R^n$. If $X_\Sigma$ is a variety with ample canonical divisor or anticanonical divisor, then
the Jordan canonical form of the Coxeter transformation has exactly

$$\sum_{i=k-1}^n(-1)^{i-k}\left(\begin{array}{cc}i+1\\k \end{array} \right)|\Sigma(n-i)| $$
many Jordan blocks of dimension $n-2k+1$ for any $0\leq k\leq [\frac{n}{2}]$.
\end{corollary}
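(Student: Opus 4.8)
The plan is to combine the two results immediately preceding the statement: Theorem~\ref{main theorem 3}(c), which counts the Jordan blocks in terms of the ranks $b_k=\operatorname{rank}\CH^k(X_\Sigma)$, and Lemma~\ref{lemma Betti number}, which expresses these ranks through the face numbers $|\Sigma(n-i)|$ of the fan. The hypotheses of the corollary---a complete smooth fan with $X_\Sigma$ carrying an ample canonical or anticanonical divisor---are precisely those of Theorem~\ref{main theorem 3}, so part (c) of that theorem applies verbatim: the Jordan canonical form of the Coxeter transformation has exactly $b_k-b_{k-1}$ Jordan blocks of dimension $n-2k+1$ for each $0\le k\le[\frac{n}{2}]$. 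The whole corollary therefore reduces to rewriting the difference $b_k-b_{k-1}$ as the asserted alternating sum.

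First I would substitute the formula of Lemma~\ref{lemma Betti number} for both $b_k$ and $b_{k-1}$, giving
\begin{align*}
b_k-b_{k-1}
&=\sum_{i=k}^{n}(-1)^{i-k}\binom{i}{k}|\Sigma(n-i)|
 -\sum_{i=k-1}^{n}(-1)^{i-k+1}\binom{i}{k-1}|\Sigma(n-i)|.
\end{align*}
Absorbing the sign $-(-1)^{i-k+1}=(-1)^{i-k}$ into the second sum turns the subtraction into an addition, so that both sums now carry the common factor $(-1)^{i-k}$. Since $\binom{k-1}{k}=0$, the first sum may be extended down to $i=k-1$ without changing its value, whereupon both sums run over the identical index range $k-1\le i\le n$.

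The key step is then Pascal's identity $\binom{i}{k}+\binom{i}{k-1}=\binom{i+1}{k}$, which collapses the two binomial coefficients in each term into the single coefficient appearing in the statement and yields
\begin{align*}
b_k-b_{k-1}=\sum_{i=k-1}^{n}(-1)^{i-k}\binom{i+1}{k}|\Sigma(n-i)|,
\end{align*}
exactly the claimed count of Jordan blocks of dimension $n-2k+1$. I do not expect any genuine obstacle here: the argument is pure index bookkeeping, the only points requiring care being the alignment of the two summation ranges (handled by the vanishing of $\binom{k-1}{k}$) and the matching of signs before invoking Pascal's rule. All the substantive content---the hard Lefschetz input, the nilpotency of the multiplication-by-$K_X$ operator, and the exponential comparison of Lemma~\ref{lemma nilpotent matrix}---is already packaged inside Theorem~\ref{main theorem 3}, so nothing beyond elementary combinatorics remains.
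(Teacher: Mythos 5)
Your proposal is correct and takes essentially the same route as the paper: both reduce the corollary to $b_k-b_{k-1}$ via Theorem \ref{main theorem 3}(c), substitute the formula of Lemma \ref{lemma Betti number}, and collapse the two sums with Pascal's identity $\binom{i}{k}+\binom{i}{k-1}=\binom{i+1}{k}$. The only cosmetic difference is your handling of the boundary term: you extend the first sum to $i=k-1$ using $\binom{k-1}{k}=0$ before applying Pascal's rule, whereas the paper splits off the $i=k-1$ term as $-|\Sigma(n-k+1)|$ and reabsorbs it at the end --- the computations agree.
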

\begin{proof}
By Theorem \ref{main theorem 3} and Lemma \ref{lemma Betti number}, we know that the Jordan canonical form of the Coxeter transformation has exactly
$$
\left[\sum_{i=k}^n(-1)^{i-k}\left(\begin{array}{cc}i\\k \end{array} \right)|\Sigma(n-i)|\right]-\left[\sum_{i=k-1}^n(-1)^{i-k+1}\left(\begin{array}{cc}i\\k-1 \end{array} \right)|\Sigma(n-i)|\right] \quad(4\mbox{-}2)
$$
Jordan blocks of dimension $n-2k+1$ for any $0\leq k\leq [\frac{n}{2}]$.
Then (4-2) is equal to
\begin{eqnarray*}
&\left[\sum_{i=k}^n(-1)^{i-k}\left(\begin{array}{cc}i\\k \end{array} \right)|\Sigma(n-i)|\right]-\left[\sum_{i=k}^n(-1)^{i-k+1}\left(\begin{array}{cc}i\\k-1 \end{array} \right)|\Sigma(n-i)|\right]\hspace{2.5cm}\\
&-|\Sigma(n-k+1)| \hspace{11.6cm}\\
=&\left\{\sum_{i=k}^n(-1)^{i-k}\left[\left(\begin{array}{cc}i\\k \end{array} \right)+ \left(\begin{array}{cc}i\\k-1 \end{array} \right) \right]|\Sigma(n-i)|\right\}-|\Sigma(n-k+1)|\hspace{3.9cm}\\
=&\left[  \sum_{i=k}^n(-1)^{i-k}\left(\begin{array}{cc}i+1\\k \end{array} \right)|\Sigma(n-i)|  \right] -|\Sigma(n-k+1)|\hspace{5.9cm}\\
=&\sum_{i=k-1}^n(-1)^{i-k}\left(\begin{array}{cc}i+1\\k \end{array} \right)|\Sigma(n-i)|. \hspace{8.3cm}
\end{eqnarray*}
\end{proof}

In the following, we compute the Jordan canonical form of the Coxeter transformation for lower dimensional toric Fano varieties.

From \cite{Oda88}, we know that there are five toric Fano surface, also called toric Del Pezzo surfaces. We give a list of the Jordan blocks of the Coxeter transformations in the following.
\[\begin{tabular}{|l|l|}
\hline
\multicolumn{1}{|c|}{Varieties}
&\multicolumn{1}{|c|}{Jordan blocks}\\
\hline
\multicolumn{1}{|c|}{$\P^2$}
&\multicolumn{1}{|c|}{3}\\
\hline
\multicolumn{1}{|c|}{$\P^1\times\P^1$}
&\multicolumn{1}{|c|}{3,1}\\
\hline
\multicolumn{1}{|c|}{$\P^2$ blown up in one point (the Hirzebruch surface $\F_1$)}
&\multicolumn{1}{|c|}{3,1}\\
\hline
\multicolumn{1}{|c|}{$\P^2$ blown up in two points}
&\multicolumn{1}{|c|}{3,1,1}\\
\hline
\multicolumn{1}{|c|}{$\P^2$ blown up in three points}
&\multicolumn{1}{|c|}{3,1,1,1}\\
\hline
\end{tabular}
\]

From \cite{Batyrev82} and \cite{Wat82}, we know that there are 18 toric fano three-fold. We give a list of the Jordan blocks of the Coxeter transformations in the following.
\[\begin{tabular}{|l|l|l|}
\hline
\multicolumn{1}{|c|}{Type}
&\multicolumn{1}{|c|}{Varieties}
&\multicolumn{1}{|c|}{Jordan blocks}\\
\hline
\multicolumn{1}{|c|}{Type I}
&\multicolumn{1}{|c|}{$\P^3$}
&\multicolumn{1}{|c|}{4}\\ \hline
&\multicolumn{1}{|c|}{$\P(\co_{\P^2}\oplus\co_{\P^2}(2))$}&
\multicolumn{1}{|c|}{4,2}\\
&\multicolumn{1}{|c|}{$\P(\co_{\P^2}\oplus\co_{\P^2}(1)$}
&\multicolumn{1}{|c|}{4,2}\\
\multicolumn{1}{|c|}{Type II}
&\multicolumn{1}{|c|}{$\P(\co_{\P^1}\oplus \co_{\P^1}\oplus\co_{\P^1}(1) $}
&\multicolumn{1}{|c|}{4,2}\\
&\multicolumn{1}{|c|}{$\P(\co_{\P^1\times\P^1}\oplus \co_{\P^1\times\P^1}(1,1)$}
&\multicolumn{1}{|c|}{4,2,2}\\
&\multicolumn{1}{|c|}{$\P(\co_{\P^1\times\P^1}\oplus \co_{\P^1\times\P^1}(1,-1)$}
&\multicolumn{1}{|c|}{4,2,2}\\
&\multicolumn{1}{|c|}{$\P(\co_{\widetilde{\P^2}(1)}\oplus \co_{\widetilde{\P^2}(1)}(l))$}
&\multicolumn{1}{|c|}{4,2,2}\\
\hline
&\multicolumn{1}{|c|}{$\P^2\times\P^1$}&
\multicolumn{1}{|c|}{4,2}\\
&\multicolumn{1}{|c|}{$\P^1\times\P^1\times\P^1$}
&\multicolumn{1}{|c|}{4,2,2}\\
\multicolumn{1}{|c|}{Type III}
&\multicolumn{1}{|c|}{$\widetilde{\P^2}(1)\times\P^1$}
&\multicolumn{1}{|c|}{4,2,2}\\
&\multicolumn{1}{|c|}{$\widetilde{\P^2}(2)\times\P^1$}
&\multicolumn{1}{|c|}{4,2,2,2}\\
&\multicolumn{1}{|c|}{$\widetilde{\P^2}(3)\times\P^1$}
&\multicolumn{1}{|c|}{4,2,2,2,2}\\
\hline
&\multicolumn{1}{|c|}{Blow up of $\P^1$ on $\P(\co_{\P^2}\oplus\co_{\P^2}(2))$}&
\multicolumn{1}{|c|}{4,2,2}\\
&\multicolumn{1}{|c|}{Blow up of $\P^1$ on $\P^2\times\P^1$}
&\multicolumn{1}{|c|}{4,2,2}\\
\multicolumn{1}{|c|}{Type IV}
&\multicolumn{1}{|c|}{$\widetilde{\P^2}(2)$-bundle over $\P^1$}
&\multicolumn{1}{|c|}{4,2,2,2}\\
&\multicolumn{1}{|c|}{$\widetilde{\P^2}(2)$-bundle over $\P^1$}
&\multicolumn{1}{|c|}{4,2,2,2}\\
&\multicolumn{1}{|c|}{$\widetilde{\P^2}(2)$-bundle over $\P^1$}
&\multicolumn{1}{|c|}{4,2,2,2}\\
&\multicolumn{1}{|c|}{$\widetilde{\P^2}(3)$-bundle over $\P^1$}
&\multicolumn{1}{|c|}{4,2,2,2,2}\\
\hline

\end{tabular}
\]
From \cite{Batyrev94, Sato00}, we know that there are 124 toric Fano 4-folds.\\

\subsection{Rational surfaces}

Now, we consider rational surfaces.

By the classification of rational
surfaces, for any given rational surface $X$, there exists a
sequence of blow-downs
$$X=X_t\xrightarrow{b_t}X_{t-1}\xrightarrow{b_{t-1}}\cdots\xrightarrow{b_2}X_1\xrightarrow{b_1}X_0,$$
where $X_0$ is either $\P^2$ or some Hirzebruch surface $\F_a$.

We fix such a sequence. Denote $R_1,\dots,R_t$ the classes of the
total transformations on $X$ of the exceptional divisors of the
blow-ups $b_i$. If $X_0\cong \P^2$, we get $H,R_1,\dots,R_t$ as a
basis of $\mathrm{Pic}(X)$ with the following intersection
relations:
$$H^2=1,H\cdot R_i=0, R_i^2=-1, R_i\cdot R_j=0\mbox{ for any }i, j\mbox{ and } i\neq j.\quad(4\mbox{-}3)$$
If $X_0\cong \F_a$, we get $P,Q,R_1,\dots, R_t$ as a basis of
$\mathrm{Pic}(X)$ with $$P^2=0,Q^2=a,P\cdot Q=1,R_i^2=-1,$$
$$R_i\cdot R_j=0, P\cdot R_i=Q\cdot R_i=0\mbox{ for any }i,j \mbox{ and }i\neq j.\quad(4\mbox{-}4)$$
See, e.g., \cite{Per09}.

The following lemma characterizes the canonical divisor and the Picard group of the blow-up.
\begin{lemma} (\cite{Hart77}) \label{lemma blow up}
$(1)$ Let $f:X\rightarrow Y$ be a dominating morphism between smooth
varieties such that $\dim X=\dim Y=n$. Then
$$K_X=f^*K_Y+R_f,$$ where $R_f\geq0$ is the ramification divisor of
$f$.

$(2)$ Let $f:\tilde{X}\rightarrow X$ be a blow-up of $X$ at a smooth
subvariety $Y$ of codimension $r\geq2$, and let $E$ be the
exceptional divisor of the blow up. Then
$$\mathrm{Pic}(\tilde{X})=f^*\mathrm{Pic}(X)\oplus\Z E$$ and
$$K_{\tilde{X}}=f^*K_X+(r-1)E.$$
\end{lemma}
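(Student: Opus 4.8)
The plan is to treat the two parts in sequence, with part $(2)$ drawing on part $(1)$. For part $(1)$, I would construct a canonical comparison map between the two canonical sheaves. The codifferential $f^*\Omega^1_Y\to\Omega^1_X$ of the morphism, after passing to $n$-th exterior powers (both $\Omega^1_X$ and $\Omega^1_Y$ are locally free of rank $n$ by smoothness), yields a morphism of line bundles $f^*\omega_Y\to\omega_X$. Since $f$ is dominant and $X,Y$ are smooth of the same dimension, and since we work in characteristic zero (so $f$ is generically smooth), the differential $df$ is an isomorphism on a dense open set; hence this map of line bundles is nonzero. A nonzero map of line bundles $f^*\omega_Y\to\omega_X$ is the same datum as a nonzero global section of $\omega_X\otimes(f^*\omega_Y)^{-1}$, whose zero locus is an effective Cartier divisor, which I would take as the \emph{definition} of $R_f$. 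Passing to divisor classes then gives $K_X=f^*K_Y+R_f$ with $R_f\geq0$.

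For part $(2)$, I would handle the Picard group and the canonical class separately. For $\pic(\tilde X)$, the key geometric input is that $f$ restricts to an isomorphism $\tilde X\setminus E\xrightarrow{\sim}X\setminus Y$. Because $Y$ is smooth of codimension $r\geq2$ in the smooth (hence locally factorial) variety $X$, the restriction $\pic(X)\to\pic(X\setminus Y)$ is an isomorphism. I would then invoke the excision sequence attached to the prime divisor $E$,
$$\Z\xrightarrow{\,1\mapsto[E]\,}\pic(\tilde X)\to\pic(\tilde X\setminus E)\to0,$$
and identify the right-hand term with $\pic(X)$. Exactness on the right gives generation, while injectivity of $1\mapsto[E]$ follows by restricting $\co_{\tilde X}(E)$ to a fibre $\ell\cong\P^{r-1}$ of the projective bundle $E=\P(N_{Y/X})\to Y$, where $\co_{\tilde X}(E)|_\ell\cong\co(-1)$ has nonzero degree. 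The pullback $f^*$ splits the sequence, yielding $\pic(\tilde X)=f^*\pic(X)\oplus\Z E$.

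For the canonical class in part $(2)$, I would apply part $(1)$ to the birational morphism $f$: this gives $K_{\tilde X}=f^*K_X+R_f$ with $R_f\geq0$. Since $f$ is an isomorphism over $X\setminus Y$, the divisor $R_f$ is supported on $E$, so $R_f=mE$ for some integer $m\geq0$, and everything reduces to showing $m=r-1$. This one genuinely local computation is the step I expect to be the main obstacle. I would choose local coordinates $x_1,\dots,x_n$ on $X$ in which $Y$ is cut out by $x_1=\cdots=x_r=0$, use a standard affine chart of the blow-up on which $f$ is given by $x_1=u_1$, $x_i=u_1u_i$ for $2\leq i\leq r$, and $x_j=u_j$ for $j>r$, with $E=\{u_1=0\}$, and compute the Jacobian determinant of $f$. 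The block structure of the Jacobian shows it equals $u_1^{r-1}$ up to a unit; equivalently, the comparison map $f^*\omega_X\to\omega_{\tilde X}$ of part $(1)$ is locally multiplication by this Jacobian, so its zero divisor is exactly $(r-1)E$. This forces $m=r-1$, and combining with the Picard computation gives the stated formula.
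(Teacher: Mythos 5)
The paper does not prove this lemma at all --- it is quoted as a known result from Hartshorne --- and your blind proof is correct and is essentially the standard argument from that reference: generic smoothness in characteristic zero makes the determinant of the codifferential a nonzero section of $\omega_X\otimes(f^*\omega_Y)^{-1}$ whose zero divisor defines $R_f$; excision for class groups of the smooth variety $\tilde X$, the codimension-$\geq 2$ isomorphism $\mathrm{Pic}(X)\cong\mathrm{Pic}(X\setminus Y)$, and restriction of $\co_{\tilde X}(E)$ to a fibre $\ell\cong\P^{r-1}$ give the split decomposition of $\mathrm{Pic}(\tilde X)$; and since $E$ is irreducible, the Jacobian computation $dx_1\wedge\cdots\wedge dx_n=u_1^{r-1}\,du_1\wedge\cdots\wedge du_n$ on a single chart meeting $E$ suffices to pin down $R_f=(r-1)E$. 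Your proof is complete as written, with all the characteristic-zero and irreducibility hypotheses invoked exactly where they are needed.
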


\begin{lemma}\label{lemma rational surface}
Let $X$ be a rational surface. Then the canonical divisor is a Lefschetz element if and only if the rank of the Picard group is not $10$.
\end{lemma}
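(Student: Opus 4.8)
The plan is to reduce the Lefschetz condition to a single numerical identity, and then compute that number in terms of the rank of the Picard group. Since $X$ is a surface, $n=2$, so an element $\omega\in H^2(X,\Q)$ is a Lefschetz element precisely when the maps $\omega^{\,n-i}\colon H^i(X)_\Q\to H^{2n-i}(X)_\Q$ are isomorphisms for $i=0,1,2$. Because $X$ is a rational surface we have $H^1(X,\Q)=H^3(X,\Q)=0$, so $\CH^\ast(X)_\Q\cong H^{2\ast}(X,\Q)$ is concentrated in degrees $0,1,2$ with $\CH^0(X)_\Q\cong\CH^2(X)_\Q\cong\Q$ and $\CH^1(X)_\Q\cong\mathrm{Pic}(X)_\Q$. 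Consequently the condition at $i=1$ is a map between zero spaces and the condition at $i=2$ is the identity on $H^2$, both automatic; the only substantive requirement is at $i=0$, namely that multiplication by $K_X^2$ be an isomorphism $\CH^0(X)_\Q\to\CH^2(X)_\Q$. As both spaces are one-dimensional and $\CH^0(X)_\Q$ is spanned by the unit, this holds if and only if $K_X^2\neq 0$. Thus $K_X$ is a Lefschetz element if and only if $K_X^2\neq 0$.

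The second step is to compute $K_X^2$ in terms of $\rho:=\mathrm{rk}\,\mathrm{Pic}(X)$ along the fixed blow-down sequence $X=X_t\to\cdots\to X_0$. By Lemma \ref{lemma blow up}(2) applied to a point (codimension $r=2$), each blow-up $b_i\colon X_i\to X_{i-1}$ gives $K_{X_i}=b_i^\ast K_{X_{i-1}}+E_i$ and $\mathrm{Pic}(X_i)=b_i^\ast\mathrm{Pic}(X_{i-1})\oplus\Z E_i$. Using $b_i^\ast K_{X_{i-1}}\cdot E_i=0$ and $E_i^2=-1$, I obtain $K_{X_i}^2=K_{X_{i-1}}^2-1$ while $\mathrm{rk}\,\mathrm{Pic}(X_i)=\mathrm{rk}\,\mathrm{Pic}(X_{i-1})+1$, so the quantity $K_{X_i}^2+\mathrm{rk}\,\mathrm{Pic}(X_i)$ is constant along the sequence. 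Evaluating at the base: $X_0\cong\P^2$ has $K^2=9$ and $\rho=1$, while $X_0\cong\F_a$ has $K^2=8$ and $\rho=2$; in both cases the constant equals $10$. Hence $K_X^2=10-\rho$. Equivalently, one may write $K_X=-3H+\sum_{i=1}^t R_i$ in the basis of (4-3), or $K_X=b^\ast K_{\F_a}+\sum_{i=1}^t R_i$ in the basis of (4-4), and read off $K_X^2$ directly from the intersection relations.

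Combining the two steps yields the claim: $K_X$ is a Lefschetz element $\iff K_X^2\neq 0 \iff 10-\rho\neq 0 \iff \rho\neq 10$.

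I do not expect a serious obstacle here. The real content is the reduction of the full hard Lefschetz condition for a surface to the single non-vanishing $K_X^2\neq 0$, which rests on the vanishing of the odd cohomology of a rational surface, together with the elementary blow-up computation $K_X^2=10-\rho$. The only points requiring care are verifying the intersection numbers $E_i^2=-1$ and $b_i^\ast K_{X_{i-1}}\cdot E_i=0$, and handling the two admissible bases $\P^2$ and $\F_a$ uniformly so that the invariant $K_X^2+\rho$ comes out to $10$ in each case.
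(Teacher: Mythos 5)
Your proposal is correct and follows essentially the same route as the paper: both reduce the Lefschetz condition, via the vanishing of odd cohomology of a rational surface, to the single non-vanishing $K_X^2\neq 0$, and both compute $K_X^2$ from the blow-up formula $K_{\tilde X}=f^*K_X+E$ with base cases $\P^2$ and $\F_a$. The only cosmetic difference is that you package the computation as the invariant $K_X^2+\mathrm{rk}\,\mathrm{Pic}(X)=10$ along the blow-down sequence, whereas the paper writes $K_X$ explicitly in the bases (4-3) and (4-4) and reads off $K_X^2=(9-t)[pt]$ or $(8-t)[pt]$ directly.
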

\begin{proof}
There exists a
sequence of blow-downs
$$X=X_t\xrightarrow{b_t}X_{t-1}\xrightarrow{b_{t-1}}\cdots\xrightarrow{b_2}X_1\xrightarrow{b_1}X_0.$$

Case 1 $X_0=\P^2$. By Lemma \ref{lemma blow up}, we
know $K_X=-3H+\sum_{i=1}^tR_i$ since the canonical divisor of $\P^2$
is $-3H$. From the intersection relations (4-3), we get that
$$K_X^2=(9-t)[pt].$$
But $\CH^i(X)_\Q=0$ for $i\neq 0,1,2$, it follows that $K_X$ is a Lefschetz element if and only if $K_X^2\neq 0$, which is equivalent to $t\neq 9$.
Furthermore, $H,R_1,\dots,R_t$ is a
basis of $\mathrm{Pic}(X)$, so the rank of the Picard group is $t+1$ and then the result follows.

Case 2 $X_0=\F_a$. Similarly, since the canonical divisor of $\F_a$
is $(a-2)P-2Q$, we
know $K_X=(a-2)P-2Q+\sum_{i=1}^tR_i$. From the intersection relations (4-4), we get that
$$K_X^2=(8-t)[pt].$$
It follows that $K_X$ is a Lefschetz element if and only if $t\neq 8$.
Furthermore, $P,Q,R_1,\dots,R_t$ is a
basis of $\mathrm{Pic}(X)$, so the rank of the Picard group is $t+2$ and then the result follows.
\end{proof}

\begin{theorem}\label{main theorem 2}
Let $X$ be a rational surface.

$(1)$ The number of the Jordan blocks in Jordan canonical form of the Coxeter transformation of $X$ is equal to $\mathrm{rank}(\mathrm{Pic}(X))$.

$(2)$ If $\mathrm{rank}(\mathrm{Pic}(X))\neq10$, then the Jordan canonical form of the Coxeter transformation has a Jordan block of dimension $3$, the others are of dimension $1$; otherwise, the Jordan canonical form of the Coxeter transformation has two Jordan blocks of dimension $2$, and eight Jordan blocks of dimension $1$.
\end{theorem}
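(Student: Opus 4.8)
The plan is to reduce everything to the nilpotent operator $\theta$ on $\CH^\ast(X)_\Q$ given by intersection with $K_X$, exactly as in the proof of Theorem \ref{main theorem 3}. For a surface ($n=2$) the Chern character isomorphism of Theorem \ref{lemma chern map} conjugates the Coxeter transformation to $(-1)^{n-1}$ times multiplication by $\ch_X(\omega_X)=\exp(K_X)$, that is, to $-\exp(\theta)$. By Lemma \ref{lemma nilpotent matrix}, $\exp(\theta)$ has the same number and sizes of Jordan blocks as $\theta$; and passing from $\exp(\theta)$ to $-\exp(\theta)$ sends each unipotent block $J(1,s)$ to $-J(1,s)$, which is similar to $J(-1,s)$, so the block sizes are unchanged. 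Hence it suffices to compute the Jordan canonical form of the nilpotent $\theta$, and the eigenvalue $-1$ predicted by Theorem \ref{main theorem 1} is recovered automatically.

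First I would record the graded structure $\CH^\ast(X)_\Q=\CH^0\oplus\CH^1\oplus\CH^2$, with $\dim\CH^0=\dim\CH^2=1$ and $\dim\CH^1=r:=\mathrm{rank}(\mathrm{Pic}(X))$, so the ambient dimension is $m=r+2$; since $\theta$ raises degree by one, $\theta^3=0$ and everything is controlled by the ranks $r_k:=\mathrm{rank}(\theta^k)$. The crucial input is $r_1=2$. Indeed $\theta(\CH^0)=\langle K_X\rangle\subseteq\CH^1$ is one-dimensional because $K_X\neq0$, while $\theta(\CH^1)=\{(D\cdot K_X)[pt]\}=\CH^2$ is one-dimensional because the linear functional $D\mapsto D\cdot K_X$ is nonzero; this follows from the nondegeneracy of the intersection form on $\CH^1$ (its Gram matrix in the basis of (4-3) is $\mathrm{diag}(1,-1,\dots,-1)$, and in the basis of (4-4) it is likewise nondegenerate) together with $K_X\neq0$. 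As these two images sit in different graded pieces, $\Im(\theta)=\langle K_X\rangle\oplus\CH^2$ has dimension $2$ irrespective of the value of $K_X^2$. Therefore the number of Jordan blocks is $\dim\ker\theta=m-r_1=r$, which is part $(1)$.

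For part $(2)$ the only remaining datum is $r_2=\mathrm{rank}(\theta^2)$. Since $\Im(\theta^2)=\langle K_X^2\rangle$, we get $r_2=1$ when $K_X^2\neq0$ and $r_2=0$ when $K_X^2=0$; by Lemma \ref{lemma rational surface} these alternatives are exactly $\mathrm{rank}(\mathrm{Pic}(X))\neq10$ and $\mathrm{rank}(\mathrm{Pic}(X))=10$. Plugging the rank data $r_0=m$, $r_1=2$, $r_3=0$ into the standard count (the number of Jordan blocks of size exactly $k$ equals $r_{k-1}-2r_k+r_{k+1}$) yields, when $K_X^2\neq0$, one block of size $3$ and $r-1$ blocks of size $1$, and, when $K_X^2=0$ (so $r=10$ and $m=12$), two blocks of size $2$ and eight blocks of size $1$. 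This is precisely the statement of $(2)$.

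Once the reduction to $\theta$ is in place the computation is routine, so the one point deserving care is the identity $r_1=2$, equivalently the surjectivity of $\CH^1\xrightarrow{\,\cdot K_X\,}\CH^2$. It would be tempting to extract this from the hard Lefschetz theorem, but Lefschetz breaks down exactly in the borderline rank-$10$ case, where $K_X$ fails to be a Lefschetz element; the uniform and correct justification is instead the nondegeneracy of the surface intersection form, which holds for every rational surface and, crucially, does not require $K_X^2\neq0$. This is what makes part $(1)$ hold uniformly and lets the single number $K_X^2$ cleanly separate the two subcases of part $(2)$.
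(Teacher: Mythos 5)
Your proof is correct, and it takes a genuinely different route from the paper's. The paper argues by cases: when $\mathrm{rank}(\mathrm{Pic}(X))\neq 10$ it combines Lemma \ref{lemma rational surface} with Remark \ref{remark lefschetz elment}, i.e., it reruns the hard Lefschetz argument of Theorem \ref{main theorem 3} with $K_X$ as a Lefschetz element; in the borderline rank-$10$ case, where Lefschetz fails, it instead writes out the explicit $12\times 12$ matrix of $\Psi$ in the basis $[X],H,R_1,\dots,R_9,[pt]$ (resp.\ $[X],P,Q,R_1,\dots,R_8,[pt]$) and checks the block structure by direct inspection. You replace both halves by one uniform rank computation for the nilpotent $\theta$: the key identity $\mathrm{rank}(\theta)=2$, which you correctly derive from the nondegeneracy of the intersection form on $\CH^1(X)_\Q$ (visible in the Gram matrices coming from (4-3) and (4-4)) together with $K_X\neq 0$, followed by the standard count of blocks of size $k$ as $r_{k-1}-2r_k+r_{k+1}$, with $r_2=\dim\langle K_X^2\rangle$ cleanly separating the two cases. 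This buys a case-free proof of part (1), eliminates the ad hoc matrix verification in the rank-$10$ case, and dispenses with hard Lefschetz entirely, its role being played by the nondegeneracy of the surface intersection pairing, which holds for every rational surface; the paper's route, by contrast, buys economy in the generic case (a one-line appeal to machinery already established) at the price of brute force in the exceptional one. Two small points of rigor on your side, neither affecting correctness: the equivalence $K_X^2\neq 0$ if and only if $\mathrm{rank}(\mathrm{Pic}(X))\neq 10$ is strictly the content of the \emph{proof} of Lemma \ref{lemma rational surface} (its statement is phrased in terms of Lefschetz elements, via $K_X^2=(9-t)[pt]$ resp.\ $(8-t)[pt]$), and you should record explicitly that $K_X\neq 0$, which is immediate since its coordinate on $H$ (resp.\ $Q$) is $-3$ (resp.\ $-2$) in the bases above.
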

\begin{proof}
By Lemma \ref{lemma rational surface} and Remark \ref{remark lefschetz elment}, we get (1) and the first statement of (2).

For the second statement of (2), the matrix which represents the Coxeter transformation is $12\times 12$. If $X$ is a blow-up of $\P^2$, by Lemma \ref{lemma rational surface}, we know $t=9$ and $\CH^1(X)_\Q$
has a basis $H, R_1,\dots,R_9$ since $\CH^1(X)\cong
\mathrm{Pic}(X)$. It is easy to see that $\CH^0(X)$ and $\CH^2(X)$ are
one-dimensional and generated by $[X]$ and $[pt]$ respectively. So $[X],
H, R_1,\dots,R_9,[pt]$ is a basis of $\CH^\ast(X)_\Q$. We denote by $\Psi$ the linear
transformation induced by $\cdot \ch_X(\omega_X)$ on the
$\CH^\ast(X)_\Q$. It is easy to see the Jordan blocks of $\Psi$ has the same dimension as the Jordan blocks of $\Phi$. By Lemma \ref{lemma blow up}, we
know $K_X=-3H+\sum_{i=1}^9R_i$ since the canonical divisor of $\P^2$
is $-3H$.
From the intersection relations, we can get
$$\ch_X(\omega_X)=1-3H+\sum_{i=1}^9R_i.$$
The
matrix of $\Psi$ under the above basis is as follows
\[\left(\begin{array}{cccccccccccccc} 1&-3&1&1&1&1&1&1&1&1&1&0\\
                                    &1 &0&0&0&0&0&0&0&0&0&-3\\
                                    &  &1&0&0&0&0&0&0&0&0&-1\\
                                    &   & & 1&0&0&0&0&0&0&0&-1\\
                                    &   & & &1&0&0&0&0&0&0&-1\\
                                    &   & & &&1&0&0&0&0&0&-1\\
                                    &   & & &&&1&0&0&0&0&-1\\
                                    &   & & &&&&1&0&0&0&-1\\
                                    &   & & &&&&&1&0&0&-1\\
                                    &   & & &&&&&&1&0&-1\\
                                    &   & & &&&&&&&1&-1\\
                                    &&&&&&&&&&&1\end{array}\right).\]
It is easy to check that this matrix has two Jordan blocks of dimension $2$, and eight Jordan blocks of dimension $1$.

If $X$ is a blow-up of $\F_a$, similarly, we get that $t=8$ and $X, P, Q, R_1,\dots,R_8,[pt]$ is a basis of $\CH^\ast(X)_\Q$. Also by Lemma \ref{lemma blow up}, since the canonical divisor of $\F_a$
is $(a-2)P-2Q$, we
know $K_X=(a-2)P-2Q+\sum_{i=1}^8R_i$.
From the intersection relations, we get $$\ch_X(\omega_X)=1+(a-2)P-2Q+\sum_{i=1}^8R_i.$$
The
matrix of $\Psi$ under the above basis is as follows
\[\left(\begin{array}{cccccccccccccc} 1&a-2&-2&1&1&1&1&1&1&1&1&0\\
                                    &1 &0&0&0&0&0&0&0&0&0&-2\\
                                    &  &1&0&0&0&0&0&0&0&0&-(a+2)\\
                                    &   & & 1&0&0&0&0&0&0&0&-1\\
                                    &   & & &1&0&0&0&0&0&0&-1\\
                                    &   & & &&1&0&0&0&0&0&-1\\
                                    &   & & &&&1&0&0&0&0&-1\\
                                    &   & & &&&&1&0&0&0&-1\\
                                    &   & & &&&&&1&0&0&-1\\
                                    &   & & &&&&&&1&0&-1\\
                                    &   & & &&&&&&&1&-1\\
                                    &&&&&&&&&&&1\end{array}\right).\]
It is easy to check that this matrix also has two Jordan blocks of dimension $2$, and eight Jordan blocks of dimension $1$.
So (2) is valid.
\end{proof}

\begin{corollary}\label{corollary derived equivalent}
If a rational surface or a smooth complete toric variety $X$ with ample canonical or anticanonical divisor is derived equivalent to a finite dimensional algebra $A$, then the Betti numbers of $X$ are determined by the matrix $-C_A^tC_A^{-1}$.
\end{corollary}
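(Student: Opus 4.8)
The plan is to reduce the statement to the two structural theorems already established, Theorem \ref{main theorem 3} and Theorem \ref{main theorem 2}, by exploiting the invariance of the Coxeter transformation under derived equivalence. First I would use that a triangulated equivalence $\cd^b(X)\simeq\cd^b(A)$ induces an isomorphism $K_0(X)\cong K_0(A)$ of Grothendieck groups that commutes with the Serre functors, hence with the Auslander--Reiten functors $S[-1]$, and therefore intertwines the two Coxeter transformations. Consequently the transformation induced by $-\otimes\omega_X[n-1]$ on $K_0(X)_\Q$ and the transformation $\Phi_A$ represented by $-C_A^tC_A^{-1}$ on $K_0(A)_\Q$ are conjugate, so they share the same Jordan canonical form. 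In particular the number and the sizes of the Jordan blocks of $-C_A^tC_A^{-1}$ coincide with those of the Coxeter transformation of $X$, and these data are intrinsic to the matrix: the sole eigenvalue is $(-1)^{n-1}$ by Theorem \ref{main theorem 1}, and the multiplicities of the block sizes are recovered from the ranks of the successive powers of $-C_A^tC_A^{-1}-(-1)^{n-1}I$.

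Next I would treat the toric case. Suppose $X$ is a smooth complete toric variety with ample canonical or anticanonical divisor. Having extracted from $-C_A^tC_A^{-1}$ the distinct block sizes $d_1>\cdots>d_s$ together with their multiplicities $t_1,\dots,t_s$, I would invoke Theorem \ref{main theorem 3}(d): part $(1)$ recovers the dimension $n=d_1-1$, and part $(2)$ expresses every even Betti number $b_j=\mathrm{rank}\,\CH^j(X_\Sigma)$ as a partial sum $t_1+\cdots+t_i$ whose index is determined by the $d_i$. Since $X_\Sigma$ is simplicial and complete, all odd Betti numbers vanish, and hence every Betti number of $X$ is now a function of the matrix $-C_A^tC_A^{-1}$ alone.

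For the rational surface case I would instead apply Theorem \ref{main theorem 2}. Here $n=2$, the eigenvalue is $-1$, and by part $(1)$ the number of Jordan blocks of the Coxeter transformation, hence of $-C_A^tC_A^{-1}$, equals $\mathrm{rank}(\mathrm{Pic}(X))$. A rational surface satisfies $b_0=b_4=1$, $b_1=b_3=0$ and $b_2=\mathrm{rank}(\mathrm{Pic}(X))$, so simply counting the Jordan blocks of $-C_A^tC_A^{-1}$ already pins down $b_2$ and therefore all Betti numbers. The finer dichotomy in part $(2)$ of Theorem \ref{main theorem 2} is not needed for recovering the Betti numbers, only for describing the block structure.

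The conceptual content is light, and I expect the only delicate point to be the first step: making precise that the equivalence carries the Serre functor of $\cd^b(X)$ to that of $\cd^b(A)$, so that the induced isomorphism on Grothendieck groups genuinely conjugates one Coxeter transformation into the other. Once this naturality is in place, the rest is a direct appeal to Theorems \ref{main theorem 3} and \ref{main theorem 2}, combined with the elementary fact that the Jordan canonical form is a conjugacy invariant computable from the matrix entries.
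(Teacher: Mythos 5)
Your proposal is correct and follows essentially the same route as the paper's own proof: invariance of the Jordan canonical form of the Coxeter transformation under derived equivalence, followed by an appeal to Theorem \ref{main theorem 3} (the toric case) and Theorem \ref{main theorem 2} (the rational surface case) to read off the ranks of the $\CH^i(X)$, which are the even Betti numbers while the odd ones vanish. You merely spell out details the paper leaves implicit, such as recovering $n=d_1-1$ from Theorem \ref{main theorem 3}(d) and obtaining $b_2=\mathrm{rank}(\mathrm{Pic}(X))$ from the block count in the surface case.
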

\begin{proof}
Since $X$ is derived equivalent to $A$, we know that the Jordan canonical form of the Coxeter transformation $\Phi_X$ is the same as that of $-C_A^tC_A^{-1}$.
From Theorem \ref{main theorem 3} and Theorem \ref{main theorem 2}, we know that the rank of $\CH^i(X)$ is determined by the matrix $-C_A^tC_A^{-1}$.
For $X$ is a rational surface or a smooth complete toric variety, we know that $b_i=\mathrm{rank}(\CH^i(X))=\mathrm{rank}(H^{2i}(X))$ is the $2i$-th Betti number. So the Betti numbers of $X$ are determined by the matrix $-C_A^tC_A^{-1}$.
\end{proof}
At the end of this section, we continue to consider the examples in Section 3.
\begin{example}\label{example projective space2}
From Theorem \ref{main theorem 3}, the Jordan canonical form of the Coxeter transformation of $\P^n$ has only one Jordan block. Let $A$ be the algebra in Example \ref{example projective space}. Then the Jordan canonical form of the Coxeter transformation $\Phi_A$ has only one Jordan block.
\end{example}
\begin{example}
Let $A$ be the algebra in Example \ref{example rational surface}. Then the Jordan canonical form of the Coxeter transformation $\Phi_A$ has $t+1$ Jordan blocks with only one of dimension $3$, and the others are of dimension $1$ if $t\neq 9$. Otherwise, $\Phi_A$ has $t+1$ Jordan blocks with two of dimension $2$, and the others are of dimension $1$ if $t=9$.
\end{example}

\section{Jordan canonical forms of tensors of matrices}
To construct graded local Frobenius algebras over an algebraically closed
field $k$, it is important to find out Jordan canonical forms of
tensors product of square matrices, see \cite{Wa03}. In fact, it is known that any graded local
Frobenius algebra is of the form of $\Lambda(\phi,\gamma)=T(V)/R(\phi,\gamma)$, where $V$ is a finite dimensional $k$-vector space, $\gamma\in \mathrm{GL}(V)$, and $\phi:V^{\otimes n}\rightarrow k$ a $k$-linear map satisfying several conditions. Further, if we decompose $V$ as $(V,\gamma)=\oplus_i(V_i,\gamma_i)$, where $V_i$ are invariant subspaces of $\gamma$ and $\gamma|_{V_i}=\gamma_i$, then the conditions of $\phi$ can be described in terms of each $\phi_{i_1\dots i_r}:V_{i_1}\otimes\cdots \otimes V_{i_r}\rightarrow k$. Then we have to consider a Jordan canonical form of $\gamma_{i_1}\otimes\cdots \otimes \gamma_{i_r}$ as an element in $\mathrm{GL}(V_{i_1}\otimes \cdots\otimes V_{i_r})$.\\

From tensor product of matrices, we can define the operator $\boxtimes$ as follows
$$J(\alpha,s)\boxtimes J(\beta,t):=\mbox{Jordan canonical form of }J(\alpha,s)\otimes J(\beta,t).$$

The explicit formula of this operator is as the following lemma shows.

\begin{lemma} (\cite{MV})\label{lemma Jordan block}
The following holds for integers $s\leq t$ and $\alpha,\beta\in k$:
\[J(\alpha,s)\boxtimes J(\beta,t)=\left\{\begin{array}{cccc}J(0,s)^{\oplus t-s+1}\oplus\bigoplus ^{2s-2}_{i=1}J(0,s-[\frac{i}{2}])& \mbox{ if }\alpha=0=\beta\\
J(0,s)^{\oplus t}& \mbox{ if }\alpha=0\neq\beta\\
J(0,t)^{\oplus s}&\mbox{ if }\alpha\neq0=\beta\\
\bigoplus^s_{i=1}J(\alpha\beta,s+t+1-2i)&\mbox{ if
}\alpha\neq0\neq\beta
\end{array}\right. .\]
\end{lemma}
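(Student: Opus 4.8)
The plan is to write each block as $J(\alpha,s)=\alpha I_s+N_s$, where $N_s:=J(0,s)$ is the nilpotent Jordan block, so that
$$J(\alpha,s)\otimes J(\beta,t)=\alpha\beta\,I_s\otimes I_t+\alpha\,I_s\otimes N_t+\beta\,N_s\otimes I_t+N_s\otimes N_t,$$
and to treat the four cases separately with two devices: a reduction to an \emph{additive} nilpotent together with Lemma \ref{lemma nilpotent matrix} when both eigenvalues are nonzero, and a direct count of Jordan blocks by ranks of powers when one eigenvalue vanishes. Throughout I would use the elementary fact that for a nilpotent matrix $M$ the number of Jordan blocks of size exactly $k$ equals $\operatorname{rank}(M^{k-1})-2\operatorname{rank}(M^{k})+\operatorname{rank}(M^{k+1})$ (with $M^0=I$), and the sanity check that the block sizes in each case sum to $st$.

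For the case $\alpha\neq 0\neq\beta$ I would first reduce to $\alpha=\beta=1$: any matrix $\alpha I_s+cN_s$ with $c\neq 0$ is similar to $J(\alpha,s)$, so $J(\alpha,s)$ is similar to $\alpha J(1,s)$, and applying $P\otimes Q$ shows $J(\alpha,s)\otimes J(\beta,t)$ is similar to $\alpha\beta\,\bigl(J(1,s)\otimes J(1,t)\bigr)$; scaling a matrix by $\alpha\beta$ multiplies eigenvalues by $\alpha\beta$ but preserves block sizes. Next, since $J(1,s)=I_s+N_s$ is unipotent, $a:=\log J(1,s)$ is a nilpotent single block of size $s$ (a polynomial in $N_s$ with nonzero linear term, hence similar to $N_s$) with $\exp(a)=J(1,s)$, and likewise $b=\log J(1,t)$. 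Because $a\otimes I_t$ and $I_s\otimes b$ commute, $J(1,s)\otimes J(1,t)=\exp\!\bigl(a\otimes I_t+I_s\otimes b\bigr)$, so by Lemma \ref{lemma nilpotent matrix} its Jordan type equals that of $a\otimes I_t+I_s\otimes b$, which is similar to the additive nilpotent $N_s\otimes I_t+I_s\otimes N_t$. Finally I would read off the Jordan type of the latter from the Clebsch–Gordan decomposition of $\mathfrak{sl}_2(k)$-modules: realizing $\mathbb{C}^s,\mathbb{C}^t$ as the irreducibles $V_{s-1},V_{t-1}$ with the raising operator $e$ acting as $N_s,N_t$, the operator $N_s\otimes I_t+I_s\otimes N_t$ becomes the coproduct $\Delta(e)$ acting on $V_{s-1}\otimes V_{t-1}\cong\bigoplus_{i=1}^{s}V_{s+t-2i}$, on each summand a single nilpotent block; since $\dim V_{s+t-2i}=s+t+1-2i$, this yields exactly $\bigoplus_{i=1}^{s}J(\alpha\beta,s+t+1-2i)$.

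The three cases involving a zero eigenvalue I would dispatch by ranks, using $(A\otimes B)^k=A^k\otimes B^k$, $\operatorname{rank}(A\otimes B)=\operatorname{rank}(A)\operatorname{rank}(B)$, $\operatorname{rank}(N_s^k)=\max(s-k,0)$, and $\operatorname{rank}(J(\beta,t)^k)=t$ for $\beta\neq 0$. For $\alpha=0\neq\beta$ the matrix $N_s\otimes J(\beta,t)$ is nilpotent with $\operatorname{rank}$ of its $k$th power equal to $t\max(s-k,0)$; the block-counting formula gives $0$ blocks of each size $k<s$ and $t$ blocks of size $s$, i.e. $J(0,s)^{\oplus t}$, and the case $\alpha\neq 0=\beta$ follows by the symmetry $J(\alpha,s)\otimes J(\beta,t)\cong J(\beta,t)\otimes J(\alpha,s)$. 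For $\alpha=0=\beta$ the $k$th power of $N_s\otimes N_t$ has rank $\max(s-k,0)\max(t-k,0)$; substituting into the formula and simplifying yields two blocks of each size $1,\dots,s-1$ together with $t-s+1$ blocks of size $s$, which is precisely the multiset encoded by $J(0,s)^{\oplus t-s+1}\oplus\bigoplus_{i=1}^{2s-2}J(0,s-[\frac{i}{2}])$.

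The main obstacle is the case $\alpha\neq 0\neq\beta$, and specifically the passage to the additive nilpotent and the determination of the Jordan type of $N_s\otimes I_t+I_s\otimes N_t$; the three degenerate cases are routine once the rank bookkeeping is set up. The key idea that removes the difficulty is the $\exp$–$\log$ correspondence, which converts the multiplicative tensor product $J(1,s)\otimes J(1,t)$ into the additive one and lets Lemma \ref{lemma nilpotent matrix} transfer the Jordan type; the Clebsch–Gordan step then supplies the block sizes cleanly, avoiding the more delicate direct computation of $\operatorname{rank}\bigl((N_s\otimes I_t+I_s\otimes N_t)^k\bigr)$.
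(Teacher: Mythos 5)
Your argument is correct, but it cannot be compared line-by-line with the paper, because the paper offers no proof of this lemma: it is imported from \cite{MV} as a black box. The closest in-paper argument is the proof of Proposition \ref{proposition jordan block}, which recovers (and generalizes) only the fourth case $\alpha\neq0\neq\beta$, and does so geometrically: the block structure of $J(\alpha_1,r_1)\boxtimes\cdots\boxtimes J(\alpha_t,r_t)$ is read off from the Coxeter transformation of $\P^{r_1-1}\times\cdots\times\P^{r_t-1}$ via Proposition \ref{theorem Jordan block}, Theorem \ref{main theorem 3} and the K\"unneth formula, i.e.\ ultimately from the hard Lefschetz theorem. Your route is purely linear-algebraic and self-contained: the $\exp$--$\log$ reduction together with Lemma \ref{lemma nilpotent matrix} (the same device the paper exploits when it writes the Coxeter transformation as $\mathrm{exp}(\theta)$ of a nilpotent operator in the proof of Theorem \ref{main theorem 3}) converts $J(1,s)\otimes J(1,t)$ into the additive nilpotent $N_s\otimes I_t+I_s\otimes N_t$, and the Clebsch--Gordan decomposition for $\mathfrak{sl}_2$ then yields the sizes $s+t+1-2i$; in fact hard Lefschetz on $\P^{s-1}\times\P^{t-1}$ is exactly this $\mathfrak{sl}_2$-theory in geometric clothing, so the two treatments of the nonzero case are close cousins. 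What your proof buys beyond the paper's machinery is the three degenerate cases with a zero eigenvalue, which the geometric argument cannot reach (there is no smooth complete variety behind $J(0,s)$), and which you handle correctly with the standard rank bookkeeping $r_{k-1}-2r_k+r_{k+1}$ for the number of blocks of size exactly $k$.

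One caveat worth recording: in the case $\alpha=0=\beta$ your computed answer (two blocks of each size $1,\dots,s-1$ and $t-s+1$ blocks of size $s$, total dimension $st$) is the correct decomposition, but it matches the displayed formula only if $[\frac{i}{2}]$ is read as the round-up $\lceil i/2\rceil$; under the floor convention the paper uses elsewhere (e.g.\ $b_{[\frac{n}{2}]}$), the summands of $J(0,s)^{\oplus t-s+1}\oplus\bigoplus_{i=1}^{2s-2}J(0,s-[\frac{i}{2}])$ have total dimension $st+s-1$, which already fails the sanity check you set up. So the discrepancy is a typo in the printed statement rather than a gap in your proof, but when you assert that the two multisets agree ``precisely,'' you should say explicitly that you are taking the ceiling reading (or re-index the direct sum accordingly).
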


In the following, we will generalize the above formula in Lemma \ref{lemma Jordan block}.
\begin{lemma}\label{lemma variety product}
Let $X$ and $Y$ be two smooth complete toric varieties and $\pi_X:X\times
Y\rightarrow X$, $\pi_Y:X\times Y\rightarrow Y$ be the projections.
Then

(1) $K_0(X\times Y)\cong K_0(X)\otimes_\Z K_0(Y)$. In particular,
$\mathrm{rank}(K_0(X\times Y))= \mathrm{rank}(K_0(X))\times
\mathrm{rank}(K_0(Y))$.

(2) $\omega_{X\times Y}\cong\pi_X^*\omega_X\otimes
\pi_Y^*\omega_Y$, the right hand side is also denoted by
$\omega_X\boxtimes\omega_Y$.
\end{lemma}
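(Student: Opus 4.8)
The plan is to prove both statements by reducing them to standard facts about products of toric varieties and then invoking the structure theory recalled in Section~2. For part~(1), the key observation is that if $X = X_\Sigma$ and $Y = X_{\Sigma'}$ are toric varieties associated to fans $\Sigma$ in $N_\R$ and $\Sigma'$ in $N'_\R$, then the product $X \times Y$ is again a toric variety, associated to the product fan $\Sigma \times \Sigma'$ in $(N \oplus N')_\R$, whose cones are exactly the products $\sigma \times \sigma'$ for $\sigma \in \Sigma$, $\sigma' \in \Sigma'$. In particular the maximal cones of $\Sigma \times \Sigma'$ are the products of maximal cones, so $|(\Sigma \times \Sigma')_{max}| = |\Sigma_{max}| \cdot |\Sigma'_{max}|$. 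First I would apply Lemma \ref{lemma toric variety} to conclude that $K_0(X \times Y)$ is free of rank $|\Sigma_{max}| \cdot |\Sigma'_{max}| = \mathrm{rank}(K_0(X)) \cdot \mathrm{rank}(K_0(Y))$, which already gives the rank assertion. To upgrade this to the asserted isomorphism $K_0(X \times Y) \cong K_0(X) \otimes_\Z K_0(Y)$, I would construct the external product map $[\E] \otimes [\F] \mapsto [\pi_X^*\E \otimes \pi_Y^*\F]$; this is a well-defined ring homomorphism because tensoring with a fixed vector bundle is exact on vector bundles, and since both sides are free of the same finite rank it suffices to check it is surjective (or an isomorphism after $-\otimes \Q$ via Theorem \ref{lemma chern map} and the K\"unneth formula for Chow groups of smooth toric varieties, $\CH^\ast(X \times Y)_\Q \cong \CH^\ast(X)_\Q \otimes_\Q \CH^\ast(Y)_\Q$).

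Part~(2) is the cleaner of the two and I would treat it by the standard adjunction-type identity for canonical sheaves of products. The canonical sheaf of a smooth variety is $\omega_X = \hat{\Omega}^{\dim X}_X = \Omega^{\dim X}_X$ (top exterior power of the cotangent bundle), and for a product there is a natural splitting of the cotangent bundle
\[
\Omega^1_{X \times Y} \cong \pi_X^* \Omega^1_X \oplus \pi_Y^* \Omega^1_Y.
\]
Taking the top exterior power and using that $\bigwedge^{top}(A \oplus B) \cong \bigwedge^{top} A \otimes \bigwedge^{top} B$, together with the fact that pullback commutes with forming top exterior powers, yields
\[
\omega_{X \times Y} \cong \pi_X^* \omega_X \otimes \pi_Y^* \omega_Y,
\]
which is exactly the claimed isomorphism. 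This step requires no toric hypothesis and holds for any product of smooth varieties.

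The main obstacle is establishing the ring isomorphism in part~(1) rather than merely the rank equality. The rank statement follows immediately from Lemma \ref{lemma toric variety} once the product fan is identified, but promoting the external product to an honest isomorphism of Grothendieck rings needs either a K\"unneth decomposition or a direct verification that the external products of basis classes (e.g.\ the classes associated to the maximal cones, or a monomial basis of $\CH^\ast$) span. The path I expect to be most economical is to pass to $\Q$-coefficients via the Chern character isomorphism of Theorem \ref{lemma chern map}, apply the K\"unneth formula for the rational Chow rings of smooth complete toric varieties, and then conclude integrally using freeness and the already-established equality of ranks together with the fact that the external product map sends a $\Z$-basis to a $\Z$-basis. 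Since only the consequences of this lemma that are used later (namely the behaviour of ranks and of $\omega_{X \times Y}$) are needed in Proposition \ref{theorem Jordan block}, I would keep the verification of integrality brief and lean on the rational statement where convenient.
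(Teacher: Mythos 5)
Your proof is correct, but it takes a genuinely different route from the paper's. For (1) the paper never mentions the product fan or an external product map: it observes that for any smooth complete toric variety $W$ one has $K_0(W)\cong \CH^\ast(W)\cong H^\ast(W)$ as free abelian groups (Lemma \ref{lemma toric variety}), applies the topological K\"unneth formula $H^\ast(X\times Y)\cong H^\ast(X)\otimes_\Z H^\ast(Y)$ (valid integrally here since the cohomology of a smooth complete toric variety is free), and notes that $X\times Y$ is again smooth complete toric; the whole argument is thus an abstract freeness-and-rank count routed through cohomology. Your identification of the product fan and the count $|(\Sigma\times\Sigma')_{max}|=|\Sigma_{max}|\cdot|\Sigma'_{max}|$ achieves the same thing while staying entirely inside Lemma \ref{lemma toric variety}, and since both sides are free abelian of the same finite rank, the abstract isomorphism asserted in the lemma already follows at that point --- which is precisely the level of the paper's own proof. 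Your further effort to realize the isomorphism by the natural map $[\E]\otimes[\F]\mapsto[\pi_X^\ast\E\otimes\pi_Y^\ast\F]$ is a genuine strengthening that the paper does not attempt; one caveat is that a rational isomorphism via Theorem \ref{lemma chern map} and the Chow K\"unneth formula does not by itself yield integrality (multiplication by $2$ on $\Z$ is a rational isomorphism), and your ``sends a $\Z$-basis to a $\Z$-basis'' step is asserted rather than proved --- but none of this extra strength is needed for the statement as written, nor for its use in Proposition \ref{theorem Jordan block}, which only invokes the ring-level K\"unneth on $H^\ast$. For (2), the paper says only that it is ``direct from the definition of the canonical sheaf''; your splitting $\Omega^1_{X\times Y}\cong \pi_X^\ast\Omega^1_X\oplus\pi_Y^\ast\Omega^1_Y$ followed by taking top exterior powers is exactly the argument being alluded to, written out, and correctly requires no toric hypothesis.
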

\begin{proof}
Because $X$ and $Y$ are two smooth complete toric varieties, from Lemma \ref{lemma toric variety} and $\CH^*(X)\cong H^*(X)$, $\CH^*(Y)\cong H^*(Y)$, we know $K_0(X)\cong  H^*(X)$ and $K_0(Y)\cong H^*(Y)$. From the K\"{u}nneth formula, we have
$$
 H^*(X\times Y)\cong  H^*(X)\otimes_\Z  H^*(Y).
$$
But $X\times Y$ is a smooth complete toric variety, so $K_0(X\times Y)\cong H^*(X\times Y)$.
Thus (1) is valid.

(2) is direct from the definition of the canonical sheaf.
\end{proof}

So we get the following proposition.
\begin{proposition}\label{theorem Jordan block}
Let $X$ and $Y$ be two smooth complete toric varieties.

(a) Let $\{a_i|i\in I, \mbox{ for some finite set }I\}$ and $\{b_j|j\in J, \mbox{ for some finite set }J\}$ be the eigenvalues of the Coxeter transformation of $X$ and $Y$ respectively. Then $\{-a_ib_j|i\in I,j\in J\}$ are the eigenvalues of the Coxeter transformation of $X\times Y$.

(b)If the Jordan
canonical form of the Coxeter transformations of $X$ and $Y$ are
$\bigoplus_{i=1}^{t_1}J^X_i$ and $\bigoplus_{j=1}^{t_2}J^Y_j$ respectively, then the sizes and number of Jordan blocks in
the Jordan canonical form of the Coxeter transformation of $X\times Y$ are the same as that of
$\bigoplus_{i,j=1}^{t_1,t_2}J^X_i\boxtimes J^Y_j$.
\end{proposition}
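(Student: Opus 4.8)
The plan is to express the Coxeter transformation of $X\times Y$ as, up to an overall sign, the tensor product of the Coxeter transformations of $X$ and $Y$, and then to read off both the eigenvalues and the Jordan block data from the corresponding facts about tensor products of matrices. The whole argument rests on Lemma \ref{lemma variety product}: the external product furnishes a $\Q$-linear isomorphism $K_0(X\times Y)_\Q\cong K_0(X)_\Q\otimes_\Q K_0(Y)_\Q$, and $\omega_{X\times Y}\cong\omega_X\boxtimes\omega_Y=\pi_X^*\omega_X\otimes\pi_Y^*\omega_Y$.

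First I would analyze the functor $-\otimes\omega_{X\times Y}$ on external products. A class $[\pi_X^*\cf\otimes\pi_Y^*\cg]$, corresponding to $[\cf]\otimes[\cg]$ under the isomorphism, is sent by twisting with $\omega_{X\times Y}$ to $[\pi_X^*(\cf\otimes\omega_X)\otimes\pi_Y^*(\cg\otimes\omega_Y)]$, i.e. to $[\cf\otimes\omega_X]\otimes[\cg\otimes\omega_Y]$. Hence, writing $\Phi'_X$, $\Phi'_Y$, $\Phi'_{X\times Y}$ for the transformations induced by $-\otimes\omega_X$, $-\otimes\omega_Y$, $-\otimes\omega_{X\times Y}$ respectively, we obtain $\Phi'_{X\times Y}=\Phi'_X\otimes\Phi'_Y$.

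Next comes the sign bookkeeping, which is the only delicate point. Write $n=\dim X$ and $m=\dim Y$, so that $\dim(X\times Y)=n+m$. Since shifting by $[k]$ acts as $(-1)^k$ on Grothendieck groups, the Coxeter transformations are $\Phi_X=(-1)^{n-1}\Phi'_X$, $\Phi_Y=(-1)^{m-1}\Phi'_Y$ and $\Phi_{X\times Y}=(-1)^{n+m-1}\Phi'_{X\times Y}$. Substituting $\Phi'_X=(-1)^{n-1}\Phi_X$ and $\Phi'_Y=(-1)^{m-1}\Phi_Y$ into $\Phi'_{X\times Y}=\Phi'_X\otimes\Phi'_Y$ and collecting signs gives $\Phi_{X\times Y}=(-1)^{n+m-1}(-1)^{n+m}(\Phi_X\otimes\Phi_Y)=-(\Phi_X\otimes\Phi_Y)$.

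With the identity $\Phi_{X\times Y}=-(\Phi_X\otimes\Phi_Y)$ in hand, part (a) is immediate: the eigenvalues of $\Phi_X\otimes\Phi_Y$ are the products $a_ib_j$, so those of $\Phi_{X\times Y}$ are $\{-a_ib_j\}$. For part (b) I would first note that if $\Phi_X\sim\bigoplus_iJ^X_i$ and $\Phi_Y\sim\bigoplus_jJ^Y_j$, then $\Phi_X\otimes\Phi_Y\sim\bigoplus_{i,j}J^X_i\otimes J^Y_j$, whose Jordan form is $\bigoplus_{i,j}J^X_i\boxtimes J^Y_j$ by the very definition of $\boxtimes$. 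The remaining observation, which I would flag as the genuine (if mild) obstacle, is that passing from $\Phi_X\otimes\Phi_Y$ to its negative changes neither the sizes nor the number of Jordan blocks: conjugating by $\mathrm{diag}(1,-1,1,-1,\dots)$ shows that $-J(\alpha,s)$ is similar to $J(-\alpha,s)$, so multiplication by $-1$ only negates eigenvalues while preserving all block sizes. Since the statement concerns only the sizes and number of blocks, the Jordan data of $\Phi_{X\times Y}$ coincide with those of $\bigoplus_{i,j}J^X_i\boxtimes J^Y_j$, which completes (b).
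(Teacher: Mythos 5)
Your proof is correct, and it reaches the paper's pivotal identity $\Phi_{X\times Y}=-(\Phi_X\otimes\Phi_Y)$ with the same sign bookkeeping and the same endgame (eigenvalues of a tensor product, similarity invariance under tensoring, and $-J(\alpha,s)\sim J(-\alpha,s)$ via conjugation by $\mathrm{diag}(1,-1,1,\dots)$ --- your explicit conjugation is in fact more detailed than the paper's one-line remark that negation changes ``only the eigenvalues''). Where you genuinely diverge is in how the multiplicativity $\Phi'_{X\times Y}=\Phi'_X\otimes\Phi'_Y$ is established. The paper does not argue in $K_0$ at all: it transports everything to singular cohomology via $K_0(W)\cong H^\ast(W)$, uses the K\"{u}nneth \emph{ring} isomorphism $H^\ast(X)\otimes H^\ast(Y)\cong H^\ast(X\times Y)$, and must then contend with the Koszul sign $(-1)^{pq}$ in the cup product on the tensor ring --- a sign it kills by invoking the vanishing of odd cohomology for smooth complete toric varieties. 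Your route, computing directly that $-\otimes\omega_{X\times Y}$ sends $[\pi_X^*\cf\otimes\pi_Y^*\cg]$ to $[\pi_X^*(\cf\otimes\omega_X)\otimes\pi_Y^*(\cg\otimes\omega_Y)]$, is cleaner in that no graded sign ever appears, and it makes transparent why the statement is really about the external product structure. The one caveat: you tacitly assume that the isomorphism $K_0(X\times Y)_\Q\cong K_0(X)_\Q\otimes K_0(Y)_\Q$ of Lemma \ref{lemma variety product} \emph{is} the external product map $[\cf]\otimes[\cg]\mapsto[\pi_X^*\cf\otimes\pi_Y^*\cg]$, whereas the paper's proof of that lemma is only a rank count through cohomology and delivers an abstract isomorphism. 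For smooth complete toric varieties the external-product form is true (e.g.\ via the Chern character ring isomorphism of Theorem \ref{lemma chern map} together with the K\"{u}nneth property of Chow groups for varieties with cell decompositions, under which external products in $K_0$ correspond to external products of Chow classes), so your argument is sound, but to be self-contained you should either cite this strengthening or note that classes of external products of vector bundles span $K_0(X\times Y)_\Q$; the paper faces the analogous identification issue and resolves it inside cohomology, where the K\"{u}nneth map is canonically the cross product.
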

\begin{proof}
Denote $n=\dim X$ and $m=\dim Y$.
Let $\omega_X$ and $\omega_Y$ be the canonical bundles of $X$ and $Y$ respectively. By Lemma \ref{lemma variety product}, we know $$K_0(X\times Y)_\Q\cong K_0(X)_\Q\otimes K_0(Y)_\Q, \mbox{ and } \omega_{X\times Y}\cong\omega_X\boxtimes \omega_Y.\quad(5\mbox{-}1)$$
Let $\Phi_X$ and $\Phi_Y$ be the Coxeter transformations of $X$ and $Y$ respectively.
Then $\Phi_X$ and $\Phi_Y$ are induced by the functors $-\otimes \omega_X[n-1]$ and $-\otimes \omega_X[m-1]$ respectively. Furthermore, $\Phi_{X\times Y}$ is the Coxeter transformation of $X\times Y$ induced by $-\otimes \omega_{X\times Y}[m+n-1]$.
Also let $\Psi_X$ be the transformation induced by $-\otimes \omega_X$, and the others are similar.

For any smooth complete toric variety $W$, we know $K_0(W)\cong H^*(W)$ as rings, so for any $\alpha\in K_0(W)$, we also use $\alpha$ to denote the corresponding element in $H^*(W)$. We also use $\Phi_W$ and $\Psi_W$ to denote the induced transformation on $H^*(W)$.

From the K\"{u}nneth formula, we have an isomorphism of rings
$$
H^*(X)\otimes_\Z  H^*(Y)\cong H^*(X\times Y).
$$
which maps $\alpha\otimes \beta$ to the cochain cross product $\alpha\times \beta$. The ring structure on $H^*(X)\otimes_\Z  H^*(Y)$ is defined to be
$$(\alpha\otimes \beta)\cup(\alpha'\otimes \beta')=(-1)^{pq}(\alpha\cup\alpha')\otimes (\beta\cup\beta'),$$ if $\beta\in H^p(Y)$ and $\alpha'\in H^q(X)$.

Notice that $\Psi_{X\times Y}$ acts on $H^*(X\times Y)$ as $\cup (\pi_1^*\omega_X\cup \pi_2^*\omega_Y)$ and $\omega_X\times \omega_Y=\pi_1^*\omega_X\cup \pi_2^*\omega_Y$. So the induced transformation $\Psi_{X\times Y}$ acts on $H^*(X)\otimes_\Z  H^*(Y)$ as $\cup(\omega_X\otimes \omega_Y)$.
For any $\alpha\otimes \beta\in H^p(X)\otimes_\Z  H^q(Y)$, we get the induced transformation $\Psi_{X\times Y}$ on $H^*(X)\otimes_\Z  H^*(Y)$ as follows:
\begin{equation*}
\begin{split}
\Psi_{X\times Y}(\alpha\otimes \beta)&=(\alpha\otimes \beta)\cup (\omega_X\otimes \omega_Y)\\
&=(-1)^{pq}(\alpha\cup \omega_X)\otimes(\beta\cup\omega_Y)\\
&=(\alpha\cup \omega_X)\otimes(\beta\cup\omega_Y)\\
&=\Psi_X(\alpha)\otimes\Psi_Y(\beta).
\end{split}
\end{equation*}
The third equality holds since for any smooth complete toric variety $W$, if $H^i(W)\neq0$, then $i$ is even.

Therefore $-\Phi_X\otimes \Phi_Y$ is the Coxeter transformation of $X\times Y$ under the isomorphisms $(5\mbox{-}1)$.  But $-\Phi_X\otimes\Phi_Y$ has the same Jordan canonical form as $\Phi_X\otimes \Phi_Y$ except the eigenvalues. The proof is complete.
\end{proof}

In the following, we use Theorem \ref{main theorem 3} to generalize Lemma \ref{lemma Jordan block}. Let
$$\prod_{j=1}^tv_{r_j}=\bigoplus_{k=0}^{n}c_k x^k, \mbox{ where }n=-t+\sum_{j=1}^{t}r_j,v_{n}=1+x+\cdots+x^{n-1}.\quad(5\mbox{-}2)$$
Then we have the following proposition.
\begin{proposition}\label{proposition jordan block}
Let $\alpha_1,\dots,\alpha_t\neq0$ and $0<r_1\leq r_2\leq\cdots\leq r_t$ be $t$ integers. Then $$J(\alpha_1,r_1)\boxtimes J(\alpha_2,r_2)\boxtimes\cdots\boxtimes J(\alpha_t,r_t)=\bigoplus_{k=0}^{[\frac{n}{2}]}( J(\alpha, n+1-2k)^{\oplus (c_k-c_{k-1})}),$$ where $\alpha=\prod_{i=1}^t\alpha_i$ and $c_k$'s are given in formula $(5\mbox{-}2)$ and $c_{-1}=0$.
\end{proposition}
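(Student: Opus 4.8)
The plan is to realize the abstract product $J(\alpha_1,r_1)\boxtimes\cdots\boxtimes J(\alpha_t,r_t)$ as the Coxeter transformation of a concrete smooth complete toric Fano variety, namely the product of projective spaces $X=\P^{r_1-1}\times\cdots\times\P^{r_t-1}$, and then to read off its Jordan structure in two independent ways that must agree.

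First I would reduce to the unipotent case. Since each $\alpha_i\neq0$, the matrix $J(\alpha_i,r_i)=\alpha_i I+N$ is conjugate, by the diagonal matrix $\mathrm{diag}(\alpha_i^{r_i-1},\dots,\alpha_i,1)$, to $\alpha_i J(1,r_i)$. Hence $J(\alpha_1,r_1)\otimes\cdots\otimes J(\alpha_t,r_t)$ is conjugate to $\alpha\,(J(1,r_1)\otimes\cdots\otimes J(1,r_t))$ with $\alpha=\prod_i\alpha_i$, so the sizes and multiplicities of its Jordan blocks are exactly those of $J(1,r_1)\boxtimes\cdots\boxtimes J(1,r_t)$, every block carrying eigenvalue $\alpha$. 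It thus suffices to show that the unipotent tensor product has a block of size $n+1-2k$ with multiplicity $c_k-c_{k-1}$, where $n=-t+\sum_j r_j$.

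Next I would identify this unipotent product with a Coxeter transformation. Each $\P^{r_j-1}$ has $\CH^i$ of rank $1$ for $0\le i\le r_j-1$, so by Theorem \ref{main theorem 3} its Coxeter transformation is a single Jordan block of size $r_j$, and by Theorem \ref{main theorem 1} its (nonzero) eigenvalue is $(-1)^{r_j}$; up to that eigenvalue it is $J(1,r_j)$. Applying Lemma \ref{lemma variety product} and Proposition \ref{theorem Jordan block} inductively to $X=\prod_j\P^{r_j-1}$, the sizes and number of Jordan blocks of the Coxeter transformation of $X$ coincide with those of $J(\cdot,r_1)\boxtimes\cdots\boxtimes J(\cdot,r_t)$, which by the first step are exactly the sizes we are after. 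On the geometric side, a product of Gorenstein Fano varieties is again Gorenstein Fano: by Lemma \ref{lemma variety product}(2) the divisor $-K_X$ is the external sum of the ample anticanonical divisors of the factors, hence ample and Cartier, so Theorem \ref{main theorem 3} applies with the anticanonical divisor of $X$ ample. Its part (c) says the Coxeter transformation has $b_k-b_{k-1}$ Jordan blocks of dimension $n+1-2k$, where $b_k=\mathrm{rank}\,\CH^k(X)$. Finally, by the K\"unneth isomorphism used in Lemma \ref{lemma variety product}, the Chow--Poincar\'e polynomial of $X$ is $\prod_j(1+x+\cdots+x^{r_j-1})=\prod_j v_{r_j}=\sum_k c_k x^k$, whence $b_k=c_k$; comparing the two descriptions of the same Jordan form gives the claimed multiplicities, with $\alpha=\prod_i\alpha_i$ reinstated from the first step.

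The main obstacle is bookkeeping rather than any single deep step. I must ensure that the passage to arbitrary nonzero eigenvalues is clean, so that only block sizes survive and the product eigenvalue is recovered correctly; that the iterated use of Proposition \ref{theorem Jordan block} across $t$ factors is legitimate and that the harmless overall sign it introduces (the ``$-$'' in $-\Phi_X\otimes\Phi_Y$) genuinely does not affect block sizes; and that the identification $b_k=c_k$ respects the indexing of $(5\mbox{-}2)$, in particular that $c_{-1}=0$ corresponds to the convention $b_{-1}=0$ of Theorem \ref{main theorem 3}. The genuine geometric inputs — that $\P^{r-1}$ contributes a single block and that the product of projective spaces is Fano — are precisely where Theorems \ref{main theorem 1} and \ref{main theorem 3} do the real work.
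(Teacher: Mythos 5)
Your proposal is correct and follows essentially the same route as the paper's own proof: reduce to trivial eigenvalues, identify the resulting tensor product (up to eigenvalue) with the Coxeter transformation of $X=\P^{r_1-1}\times\cdots\times\P^{r_t-1}$ via Proposition \ref{theorem Jordan block}, and read off the block sizes and multiplicities from Theorem \ref{main theorem 3} together with the K\"unneth identification $b_k=c_k$. Your explicit diagonal conjugation $J(\alpha_i,r_i)\sim\alpha_i J(1,r_i)$ and your verification that $-K_X$ is ample (so that Theorem \ref{main theorem 3} genuinely applies) are details the paper leaves implicit, but they do not change the argument.
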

\begin{proof}
By Lemma \ref{lemma Jordan block}, do it inductively, we can see that $$J(\alpha_1,r_1)\boxtimes J(\alpha_2,r_2)\boxtimes\cdots\boxtimes J(\alpha_t,r_t)$$ is a finite direct sum of some Jordan blocks and their eigenvalues are equal to $\alpha$. Since the Jordan canonical form of the tensor product of these Jordan blocks does not depend on $\alpha_1,\dots,\alpha_t$ if they are all non-zero, we only need do it for $\alpha_i=1$ or $-1$ for $1\leq i\leq t$.

By Example \ref{example projective space2}, we know that the Jordan canonical form of the Coxeter transformation of $\P^{r_i-1}$ has only one Jordan block of dimension $r_i$ with eigenvalues $(-1)^{r_i}$.
From Proposition \ref{theorem Jordan block}, we also know that the Jordan canonical form of the Coxeter transformation of $X=\P^{r_1-1}\times\P^{r_2-1}\times\cdots \times\P^{r_t-1}$ is the same as $$J((-1)^{r_1},r_1)\boxtimes J((-1)^{r_2},r_2)\boxtimes\cdots\boxtimes J((-1)^{r_t},r_t),$$ except the eigenvalues.

From the K\"{u}nneth formula, we know that the $2k$-th Betti number of $X$ is $c_k$ where $c_k$ is as the formula (5-2) shows.
By Theorem \ref{main theorem 3}, we get that $$J(\alpha_1,r_1)\boxtimes J(\alpha_2,r_2)\boxtimes\cdots\boxtimes J(\alpha_t,r_t)$$ has $(c_k-c_{k-1})$ Jordan blocks of dimension $(n+1-2k)$ for $0\leq k\leq [\frac{n}{2}]$ since $n=-t+\sum_{j=1}^{t}r_j$ is the dimension of $X$.
\end{proof}

\begin{remark}
Let $A$ and $B$ be two finite dimensional algebras with finite global diemension over
algebraically closed field $k$. Then the relation between the Jordan
canonical form of the Coxeter transformations of $A$, $B$ and $A\otimes_k B$
is the same as Proposition \ref{theorem Jordan block} describes. In particular,
$$K_0(A\otimes_k B)\cong K_0(A)\otimes K_0(B).$$
\end{remark}

In fact, just as the proof of Proposition 4.16 in \cite{CM04}, we can prove the following.
\begin{proposition}\label{proposition deireved equivalent}
Let $X$, $Y$ be two smooth projective varieties over $k$ and $A$, $B$ be two finite
dimensional algebras with finite global dimensions also over $k$. If
$\cd^b(X)\simeq \cd^b(A)$ and $\cd^b(Y)\simeq \cd^b(B)$ induced by tilting bundles, then
$\cd^b(X\times Y)\simeq \cd^b(A\otimes B)$.
\end{proposition}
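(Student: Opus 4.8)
The plan is to realize the two derived equivalences through tilting bundles and then to manufacture a tilting bundle on the product by external tensor product. By hypothesis $\cd^b(X)\simeq\cd^b(A)$ and $\cd^b(Y)\simeq\cd^b(B)$ are induced by tilting bundles, so there are tilting bundles $T_X$ on $X$ and $T_Y$ on $Y$ with $A\cong\End_X(T_X)$ and $B\cong\End_Y(T_Y)$. Writing $\pi_X,\pi_Y$ for the two projections of $X\times Y$, I would set $T:=\pi_X^*T_X\otimes_{\co_{X\times Y}}\pi_Y^*T_Y=T_X\boxtimes T_Y$ and prove that $T$ is a tilting bundle on $X\times Y$ with $\End_{X\times Y}(T)\cong A\otimes_k B$. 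Granting this, the tilting theorem recalled after the definition of tilting sheaf yields $\cd^b(X\times Y)\simeq\cd^b(\End_{X\times Y}(T))=\cd^b(A\otimes_k B)$.

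The main computational tool is a K\"unneth isomorphism in the derived category: for coherent sheaves one has
$$\RHom_{X\times Y}(\cf\boxtimes\cg,\cf'\boxtimes\cg')\cong\RHom_X(\cf,\cf')\otimes_k\RHom_Y(\cg,\cg'),$$
the right-hand side being the tensor product of complexes of $k$-vector spaces, and since $k$ is a field this gives $\Ext^i_{X\times Y}(\cf\boxtimes\cg,\cf'\boxtimes\cg')\cong\bigoplus_{p+q=i}\Ext^p_X(\cf,\cf')\otimes_k\Ext^q_Y(\cg,\cg')$. I would then check the three tilting conditions in turn. The sheaf $T$ is locally free, being a tensor product of pullbacks of vector bundles. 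Applying the K\"unneth formula with $\cf=\cf'=T_X$ and $\cg=\cg'=T_Y$, and using $\Ext^{>0}_X(T_X,T_X)=0=\Ext^{>0}_Y(T_Y,T_Y)$, kills every summand with $p+q=i>0$, so $\Ext^i_{X\times Y}(T,T)=0$ for $i>0$; the surviving term $p=q=0$ furnishes the ring isomorphism $\End_{X\times Y}(T)\cong\End_X(T_X)\otimes_k\End_Y(T_Y)=A\otimes_k B$. Finally $A\otimes_k B$ has finite global dimension, since $k$ is algebraically closed and $A,B$ do, whence $\operatorname{gl.dim}(A\otimes_k B)\le\operatorname{gl.dim}A+\operatorname{gl.dim}B<\infty$.

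It remains to verify the generation condition, which I expect to be the main obstacle. Decomposing $T_X=\bigoplus_i T_{X,i}$ and $T_Y=\bigoplus_j T_{Y,j}$ into indecomposable summands, the summands of $T$ are the $T_{X,i}\boxtimes T_{Y,j}$, and I must show they generate $\cd^b(X\times Y)$ as a triangulated category. The idea is that external products of generators generate the product: the smallest thick triangulated subcategory of $\cd^b(X\times Y)$ containing every $\pi_X^*E\otimes\pi_Y^*F$ with $E\in\cd^b(X)$ and $F\in\cd^b(Y)$ is the whole category, and within it one may replace $E$ by the $T_{X,i}$ and $F$ by the $T_{Y,j}$, since these already generate each factor. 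Making this precise — reducing an arbitrary object of $\cd^b(X\times Y)$ to external products and then to products of the chosen summands, while controlling the triangulated closure in both variables simultaneously — is the delicate step; it runs exactly parallel to the argument for Proposition~4.16 in \cite{CM04}, which I would follow.
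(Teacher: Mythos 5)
Your proposal is correct and follows essentially the same route as the paper: form $T_X\boxtimes T_Y$, apply the K\"unneth formula to obtain $\Ext^{>0}_{X\times Y}(T_X\boxtimes T_Y,T_X\boxtimes T_Y)=0$ and $\End_{X\times Y}(T_X\boxtimes T_Y)\cong A\otimes_k B$, and note that $A\otimes_k B$ has finite global dimension. The generation step you flag as the delicate point is dispatched in the paper by a single citation of Lemma 3.4.1 of \cite{BoVdB03}, which asserts exactly what your sketch aims to prove, namely that the external product of generators of $\cd^b(X)$ and $\cd^b(Y)$ generates $\cd^b(X\times Y)$.
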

\begin{proof}
Let $E$ and $F$ be the tilting bundles in $\cd^b(X)$ and $\cd^b(Y)$ respectively which induce $\cd^b(X)\simeq \cd^b(A)$ and $\cd^b(Y)\simeq \cd^b(B)$. Then
$\Ext^i_X(E,E)=0$ and $\Ext^i_Y(F,F)=0$ for all $i>0$, and $\Hom_X(E,E)\cong A$, $\Hom_Y(F,F)\cong B$. By Lemma 3.4.1 in \cite{BoVdB03}, we get that $E\boxtimes F$ generates $\cd^b(X\times Y)$.
Applying the K\"{u}nneth formula for locally free sheaves on algebraic varieties, we get
\[\begin{array}{ccc}\Hom_{X\times Y}(E\boxtimes F,E\boxtimes F)&\cong& \H^0(X\times Y,({E}^\vee\otimes E)\boxtimes({F}^\vee\otimes F))\\
&\cong & \H^0(X,{E}^\vee\otimes E)\otimes\H^0(Y,{F}^\vee\otimes F)\,\,\\
&\cong& \Hom_X(E,E)\otimes \Hom_Y(F,F).\quad\quad \\
&\cong& A\otimes B.\hspace{4cm}\end{array}\]
Since $A$ and $B$ has finite global dimensions, so is $A\otimes B$.

Also applying the K\"{u}nneth formula for locally free sheaves on algebraic varieties, for any $i>0$, we get
\[\begin{array}{ccc}\Ext^i_{X\times Y}(E\boxtimes F,E\boxtimes F)&\cong& H^i(X\times Y, (E^\vee\otimes E)\boxtimes (F^\vee\otimes F))\hspace{5.8cm}\\
&\cong& \bigoplus_{j+k=i}H^j(X,(E^\vee\otimes E) )\otimes H^k(Y,(F^\vee\otimes F) )\hspace{3.9cm}\\
&\cong& \bigoplus_{j+k=i}\Ext^j_X(E,E)\otimes \Ext^k_Y(F,F)\hspace{5.7cm}\\
&=&0.\hspace{10.6cm}\end{array}\]
The last equality holds since for any $i>0$, either $j>0$ or $k>0$.

Therefore, $E\boxtimes F$ is a titling bundle in $\cd^b(X\times Y)$ and then $\cd^b(X\times Y)\simeq \cd^b(A\otimes B)$.
\end{proof}

\begin{example}
From \cite{Beilinson79}, we know that $\cd^b(\P^1)$ is triangulated equivalent to $\cd^b(A)$, where $A=k(\vec{Q})$ and $\vec{Q}:\xymatrix{\circ \ar@<-0.5ex>[r]_{a_2} \ar@<0.5ex>[r]^{a_1}& \circ}$. So $\cd^b(\P^1\times \P^1)$ is triangulated equivalent to $\cd^b(A\otimes A)$, where $A\otimes A\cong k\vec{Q}_1/I$ and $\vec{Q}_1$ is the following quiver and $I=\langle a_ic_j -b_jd_i |i,j=1,2  \rangle$.
\[\xymatrix{\circ   \ar@<-0.5ex>[rr]_{c_2} \ar@<0.5ex>[rr]^{c_1}&& \circ   \\
\\
\circ  \ar@<-0.5ex>[uu]_{a_2} \ar@<0.5ex>[uu]^{a_1} \ar@<-0.5ex>[rr]_{b_1} \ar@<0.5ex>[rr]^{b_2} &&\circ  \ar@<-0.5ex>[uu]_{d_1} \ar@<0.5ex>[uu]^{d_2}}\]
\end{example}

\end{document}